\newcommand{\CC}{{C\nolinebreak[4]\hspace{-.05em}\raisebox{.4ex}{\tiny\bf ++}}}
\newtheorem{theorem}{Theorem}[section]
\newtheorem{corollary}[theorem]{Corollary}
\newtheorem{lemma}[theorem]{Lemma}
\theoremstyle{definition}
\newtheorem{definition}{Definition}[section]
\theoremstyle{definition}
\newtheorem{problem}[theorem]{Problem}
\theoremstyle{definition}
\newtheorem{example}[theorem]{Example}
\theoremstyle{definition}
\newtheorem{rem}{Remark}[section]
\title{Orderly generation of Butson Hadamard matrices}
\author{Pekka H.J. Lampio, Patric R.J. \"Osterg\aa rd, and Ferenc Sz\"oll\H{o}si}
\date{\today. Preprint. This research was supported in part by the Academy of Finland, Grant \#289002}
\address{P.H.J. L., P.R.J. \"O., and F. Sz.: Department of Communications and Networking, Aalto University School of Electrical Engineering, P.O. Box 15400, 00076 Aalto, Finland}
\email{pekka.lampio@aalto.fi, patric.ostergard@aalto.fi, szoferi@gmail.com}
\begin{document}
\begin{abstract}
In this paper Butson-type complex Hadamard matrices $\mathrm{BH}(n,q)$ of order $n$ and complexity $q$ are classified for small parameters by computer-aided methods. Our main results include the enumeration of $\mathrm{BH}(21,3)$, $\mathrm{BH}(16,4)$, and $\mathrm{BH}(14,6)$ matrices. There are exactly $72$, 1786763, and $167776$ such matrices, up to monomial equivalence. Additionally, we show an example of a $\mathrm{BH}(14,10)$ matrix for the first time, and show the nonexistence of $\mathrm{BH}(8,15)$, $\mathrm{BH}(11,q)$ for $q\in\{10,12,14,15\}$, and $\mathrm{BH}(13,10)$ matrices.
\end{abstract}
\maketitle

\section{Introduction}
Let $n$ and $q$ be positive integers. A Butson-type complex Hadamard matrix of order $n$ and complexity $q$ is an $n\times n$ matrix $H$ such that $HH^\ast=nI_n$, and each entry of $H$ is some complex $q$th root of unity, where $I_n$ denotes the identity matrix of order $n$, and $H^\ast$ denotes the conjugate transpose of $H$. The rows (and columns) of $H$ are therefore pairwise orthogonal in $\mathbb{C}^n$. For a fixed $n$ and $q$ we denote the set of all Butson-type complex Hadamard matrices by $\mathrm{BH}(n,q)$, and we simply refer to them as a ``Butson matrix'' for brevity \cite{cHOR}. The canonical examples are the Fourier matrices $F_n:=[\mathrm{exp}(2\pi\mathbf{i} jk/n)]_{j,k=1}^n\in\mathrm{BH}(n,n)$, frequently appearing in various branches of mathematics \cite{cKarol}.

A major unsolved problem in design theory is ``The Hadamard Conjecture'' which predicts the existence of $\mathrm{BH}(n,2)$ matrices (real Hadamard matrices) for all orders divisible by $4$. The concept of Butson matrices was introduced to shed some light onto this question from a more general perspective \cite{cBUT}. Complex Hadamard matrices play an important role in the theory of operator algebras \cite{cHaa}, \cite{cNic}, and they have also applications in harmonic analysis \cite{cKMat}. Currently there is a renewed interest in complex Hadamard matrices due to their connection to various concepts of quantum information theory, e.g., to quantum teleportation schemes and to mutually unbiased bases \cite{cBAN}, \cite{cDIT}, \cite{cKA}, \cite{cKarol}, \cite{cWer}. 

This paper is concerned with the computer-aided generation and classification of Butson matrices. Let $X$ be an $n\times n$ monomial matrix, that is $X$ has exactly one nonzero entry in each of its rows and columns which is a complex $q$th root of unity. The group $G$ of pairs of monomial matrices act on the Butson matrix $H$ by $H^{(X,Y)}\to XHY^\ast$. Two Butson matrices $H_1$ and $H_2$ are called (monomial) equivalent, if they are in the same $G$-orbit. The automorphism group of $H$, denoted by $\mathrm{Aut}(H)$ is the stabilizer subgroup of $G$ with respect to $H$. Note that if $H\in\mathrm{BH}(n,q)$ then naturally $H\in\mathrm{BH}(n,r)$ for any $r$ being a multiple of $q$. Therefore the group $\mathrm{Aut}(H)$ depends on the choice of $q$.

Earlier work predominantly considered the classification of the real case in a series of papers \cite{cKha}, \cite{cKIM}, \cite{cSPE}, see also \cite[Section~7.5]{OLDBOOK} for a historical overview. The quaternary case also received some attention in \cite{cLOS} and \cite{cS1}. Other papers in the literature dealt with settling the simpler existence problem through combinatorial constructions \cite{cBAN}, \cite{cSEB}, \cite{cS2}, \cite{cKYO} or focused on the generation of matrices with some special structure \cite{cAKI}, \cite{cCCdL}, \cite{cCHK}, \cite{cDJ}, \cite{cPAD}, \cite{cHIR}, \cite{cMW}.

The outline of this paper is as follows. In Section~\ref{sect2} we give a short overview of computer representation of Butson matrices, and recall the concept of vanishing sums of root of unity. In~Section~\ref{sect3} we briefly describe the method of orderly generation which serves as the framework used for equivalence-free exhaustive generation. In Section~\ref{sect4} we present three case studies: the classification of $\mathrm{BH}(16,4)$ matrices; the classification of $\mathrm{BH}(21,3)$ matrices; and the nonexistence of $\mathrm{BH}(n,q)$ matrices for several values $n$ and $q$. An additional notable contribution of this section is Theorem~\ref{newkron} establishing a connection between unreal $\mathrm{BH}(n,6)$ matrices and $\mathrm{BH}(2n,4)$ matrices. We conclude the paper in Section~\ref{sect99} with several open problems.

The results of this paper considerably extend the work \cite[Theorem~7.10]{cBAN}, where the (non)existence of Butson matrices was settled for $n\leq 10$ and $q\leq 14$. The reader might wish to jump ahead to Table~\ref{tableBE} to get a quick overview of the known number of $\mathrm{BH}(n,q)$ matrices for $n\leq 21$ and $q\leq 17$, including the new results established in this paper for the first time. The generated matrices are available as an electronic supplement on the web.\footnote{See \url{https://wiki.aalto.fi/display/Butson}.} The interested reader is also referred to \cite{cKarolweb} where various parametric families of complex Hadamard matrices \cite{cDIT} can be found, based on the catalog \cite{cKarol}.

\section{Computer representation of Butson Hadamard matrices}\label{sect2}
A Butson matrix $H\in\mathrm{BH}(n,q)$ is conveniently represented in logarithmic form, that is, the matrix $H=[\mathrm{exp}(2\pi\mathbf{i}\varphi_{j,k}/q)]_{j,k=1}^n$ is represented by the matrix $L(H):=[\varphi_{j,k}\ \mathrm{mod}\ q]_{j,k=1}^n$ with the convention that $L_{j,k}\in\mathbb{Z}_q$ for all $j,k\in\{1,\dots,n\}$. Throughout this paper we denote by $\mathbb{Z}_q$ the additive group of integers modulo $q$, where the underlying set is $\{0,\dots,q-1\}$. With this convention $(\mathbb{Z}_q^n,\prec)$ is a linearly ordered set, where for $a,b\in\mathbb{Z}_q^n$ we write $a\prec b$ if and only if $a=b$ or $a$ lexicographically precedes $b$.

\begin{example}\label{ex1}
The following is a $\mathrm{BH}(14,10)$ matrix $H$, displayed in logarithmic form.
\tiny
\[\text{\normalsize$L(H)=$}\left[
\begin{array}{cccccccccccccc}
 0 & 0 & 0 & 0 & 0 & 0 & 0 & 0 & 0 & 0 & 0 & 0 & 0 & 0 \\
 0 & 0 & 0 & 2 & 2 & 2 & 4 & 4 & 5 & 6 & 6 & 7 & 8 & 8 \\
 0 & 0 & 2 & 6 & 6 & 8 & 2 & 8 & 5 & 1 & 4 & 6 & 0 & 4 \\
 0 & 0 & 4 & 8 & 8 & 6 & 2 & 5 & 3 & 6 & 8 & 2 & 4 & 0 \\
 0 & 0 & 8 & 4 & 4 & 8 & 6 & 0 & 9 & 4 & 6 & 2 & 5 & 2 \\
 0 & 1 & 6 & 5 & 1 & 4 & 8 & 9 & 2 & 7 & 2 & 7 & 3 & 6 \\
 0 & 1 & 6 & 0 & 6 & 4 & 8 & 4 & 7 & 2 & 2 & 2 & 8 & 6 \\
 0 & 5 & 2 & 8 & 3 & 5 & 4 & 9 & 0 & 3 & 0 & 5 & 7 & 8 \\
 0 & 5 & 2 & 2 & 7 & 0 & 6 & 7 & 2 & 9 & 4 & 1 & 5 & 7 \\
 0 & 5 & 4 & 6 & 1 & 0 & 0 & 5 & 8 & 5 & 3 & 9 & 7 & 2 \\
 0 & 5 & 7 & 4 & 9 & 6 & 4 & 3 & 2 & 1 & 8 & 9 & 9 & 4 \\
 0 & 5 & 8 & 0 & 5 & 2 & 9 & 7 & 4 & 3 & 8 & 7 & 3 & 2 \\
 0 & 6 & 0 & 8 & 9 & 4 & 6 & 2 & 7 & 8 & 4 & 4 & 2 & 2 \\
 0 & 6 & 5 & 3 & 4 & 9 & 1 & 2 & 7 & 8 & 9 & 4 & 2 & 7 \\
\end{array}
\right]\text{\normalsize, $|\mathrm{Aut}(H)|=20$.}\]
\normalsize
\end{example}

Observe that the matrix shown in Example~\ref{ex1} is in dephased form \cite{cKarol}, that is, its first row and column are all $0$ (representing the logarithmic form of $1$). Every matrix can be dephased by using equivalence-preserving operations. Throughout this paper all matrices are assumed to be dephased.

Let $H\in\mathrm{BH}(n,q)$, and let $r_1, r_2\in\mathbb{Z}_q^n$ be row vectors of $L(H)$. Then, by complex orthogonality, the difference row $d:=r_1-r_2\in\mathbb{Z}_q^n$ satisfies $\mathcal{E}_{n,q}(d)=0$, where
\begin{equation*}
\mathcal{E}_{n,q} \colon \mathbb{Z}_q^n \to \mathbb{C},\qquad \mathcal{E}_{n,q}(x):=\sum_{i=1}^{n}\mathrm{exp}(2\pi\mathbf{i}x_i/q)
\end{equation*}
 is the evaluation function. In other words, $d$ represents an $n$-term vanishing sum of $q$th roots of unity \cite{cLL}. We note that the number $\mathcal{E}_{n,q}(x)$ is algebraic, and its value is invariant up to permutation of the coordinates of $x\in\mathbb{Z}_q^n$. In particular, $\mathcal{E}_{n,q}(x)=\mathcal{E}_{n,q}(\mathrm{Sort}(x))$, where $\mathrm{Sort}(x)=\min\{\sigma(x)\colon \text{$\sigma$ is a permutation on $n$ elements}\}$ (with respect to the ordering $\prec$). We introduce the orthogonality set which contains the representations of the normalized, sorted, $n$-term vanishing sums of $q$th roots of unity:
\begin{equation*}
\mathcal{O}(n,q):=\{x\in\mathbb{Z}_q^n \colon x_1=0;\ x=\mathrm{Sort}(x);\ \mathcal{E}_{n,q}(x)=0\}.
\end{equation*}
Once precomputed, the set $\mathcal{O}(n,q)$ allows us to determine if two rows of length $n$ of a dephased matrix with elements in $\mathbb{Z}_q$ are complex orthogonal in a combinatorial way, i.e., without relying on the analytic function $\mathcal{E}_{n,q}$. Indeed, for any vector $x\in \mathbb{Z}_q^n$ having at least one $0$ coordinate, $\mathcal{E}_{n,q}(x)=0$ if and only if $\mathrm{Sort}(x)\in\mathcal{O}(n,q)$.

One can observe that for certain values of $n$ and $q$ the set $\mathcal{O}(n,q)$ is empty, that is, it is impossible to find a pair of orthogonal rows in $\mathbb{Z}_q^n$ and consequently $\mathrm{BH}(n,q)$ matrices do not exist. For example, it is easy to see that $|\mathcal{O}(n,2)|=0$ for odd $n>1$. The following recent result characterizes the case when the set $\mathcal{O}(n,q)$ is nonempty, and should be viewed as one of the fundamental necessary conditions on the existence of Butson matrices. 
\begin{theorem}[\mbox{\cite[Theorem~5.2]{cLL}}]\label{LLMAIN}
Let $n$, $r$, and $a_i$, $i\in\{1,\dots, r\}$ be positive integers, and let $q=\prod_{i=1}^rp_i^{a_i}$ with distinct primes $p_i$, $i\in\{1,\dots, r\}$. Then, we have $|\mathcal{O}(n,q)|\geq 1$ if and only if there exist nonnegative integers $w_i$, $i\in\{1,\dots,r\}$ such that $n=\sum_{i=1}^r w_i p_i$.
\end{theorem}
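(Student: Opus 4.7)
The plan is to handle the two implications of this biconditional separately; the substantive direction is necessity.

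For sufficiency, suppose $n = \sum_{i=1}^r w_i p_i$ with nonnegative integers $w_i$. For each prime $p_i \mid q$, the length-$p_i$ vector $v^{(i)} := (0, q/p_i, 2q/p_i, \ldots, (p_i-1)q/p_i) \in \mathbb{Z}_q^{p_i}$ satisfies $\mathcal{E}_{p_i,q}(v^{(i)}) = 1 + \zeta_{p_i} + \cdots + \zeta_{p_i}^{p_i-1} = 0$, where $\zeta_{p_i} = \exp(2\pi \mathbf{i}/p_i)$. Concatenating $w_i$ copies of $v^{(i)}$ over all $i$ and sorting the resulting vector yields some $x \in \mathbb{Z}_q^n$ with $\mathcal{E}_{n,q}(x) = 0$, $x_1 = 0$, and $x = \mathrm{Sort}(x)$; hence $x \in \mathcal{O}(n,q)$.

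For necessity, I would follow the Lam--Leung strategy of induction on the number $r$ of distinct prime divisors of $q$. The base case $r = 1$, with $q = p^a$ a prime power, rests on the cyclotomic relation $1 + \zeta_q^{p^{a-1}} + \zeta_q^{2p^{a-1}} + \cdots + \zeta_q^{(p-1)p^{a-1}} = 0$ coming from $\Phi_{p^a}(\zeta_q) = 0$: given a vanishing sum of $n$ (not necessarily distinct) $q$-th roots of unity, one groups terms whose exponents share a common residue modulo $p^{a-1}$, then applies a pigeonhole/descent argument that either peels off a ``basic'' $p$-term subsum or reduces to a vanishing sum of $p^{a-1}$-th roots of unity, eventually forcing $p \mid n$. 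For the inductive step, write $q = q' p$ with $p$ a prime factor, partition the terms of the sum by their residue modulo $q'$ to obtain shorter vanishing sums in smaller cyclotomic fields, and invoke the inductive hypothesis to extract a decomposition $n = \sum_{i=1}^r w_i p_i$.

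The main obstacle is maintaining \emph{nonnegativity} of the coefficients $w_i$ throughout the inductive reduction. A linear-algebra approach over $\mathbb{Q}$ readily yields a signed integer combination $n = \sum c_i p_i$ with $c_i \in \mathbb{Z}$, since the $\mathbb{Z}$-module of additive relations among $q$-th roots of unity is generated by shifts of basic vanishing sums and admits coefficients of either sign; lifting such a signed decomposition to a genuinely nonnegative one is the heart of the Lam--Leung theorem and requires careful combinatorial bookkeeping of cancellations. Once established, the biconditional furnishes a purely combinatorial certificate for the existence of a pair of complex-orthogonal rows of length $n$ over $\mathbb{Z}_q$, which is exactly the feasibility filter needed to prune the search in the orderly generation framework of Section~\ref{sect3}.
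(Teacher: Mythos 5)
This statement is imported verbatim from Lam--Leung (their Theorem~5.2); the paper offers no proof of it, only the citation, so there is no in-paper argument to compare yours against. Your sufficiency direction is correct and complete: concatenating $w_i$ rotated copies of the basic relation $1+\zeta_{p_i}+\cdots+\zeta_{p_i}^{p_i-1}=0$, then sorting, produces an element of $\mathcal{O}(n,q)$ with a zero coordinate. This is exactly the mechanism the paper later makes explicit in Lemma~\ref{l1} and Lemma~\ref{l2} for $q$ with at most two prime-power factors, so that half is fine.

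The necessity direction, however, is a program rather than a proof, and you say as much yourself. Your observation that a \emph{signed} representation $n=\sum c_ip_i$ with $c_i\in\mathbb{Z}$ follows from the classical fact (R\'edei, de~Bruijn, Schoenberg) that the relation module $\ker\bigl(\mathbb{Z}[\mathbb{Z}_q]\to\mathbb{Z}[\zeta_q]\bigr)$ is generated by rotations of the sums $\sum_{j=0}^{p-1}x^{jq/p}$ is correct --- apply the augmentation map to such a representation. But upgrading the $c_i$ to nonnegative integers is precisely the content of the Lam--Leung theorem, and your sketch (grouping by residues, induction on the number of prime factors, ``careful combinatorial bookkeeping of cancellations'') does not actually carry out the key reduction step; in Lam--Leung this requires a genuine structure argument in $\mathbb{Z}[\zeta_q]$ viewed over $\mathbb{Z}[\zeta_{q/p^{a}}]$, not mere pigeonholing. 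So as a self-contained proof your attempt has a real gap in the ``only if'' direction. That said, the gap coincides exactly with what the paper itself delegates to the literature, so the appropriate fix here is not to fill it but to cite \cite{cLL} for necessity and keep your explicit construction for sufficiency.
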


In order to classify all $\mathrm{BH}(n,q)$ matrices for a given parameters, three tasks have to be completed: the set $\mathcal{O}(n,q)$ has to be determined; vectors $x\in \mathbb{Z}_q^n$ orthogonal to a prescribed set of vectors should be generated; and equivalent matrices should be rejected. In the next section we discuss these three tasks in detail.

\section{Generating Butson Hadamard matrices}\label{sect3}
\subsection{Generating the vanishing sums of roots of unity}\label{sssectonq}
For a given $n$ and $q$, our first task is to determine the set $\mathcal{O}(n,q)$ which in essence encodes complex orthogonality of a pair of rows. It turns out that when $q$ is a product of at most two prime powers, then a compact description of the elements of $\mathcal{O}(n,q)$ is possible. The following two results are immediate consequences of \cite[Corollary~3.4]{cLL}.

\begin{lemma}\label{l1}
Let $a$, $n$ be positive integers, and let $q=p^a$ be a prime power. Let $u=[0,q/p,2q/p,\dots,(p-1)q/p]\in\mathbb{Z}_q^p$, and let $x\in\mathbb{Z}_q^n$. Then $x\in\mathcal{O}(n,q)$ if and only if there exist a positive integer $s$ such that $ps=n$, and $r_i\in\{0,\dots,q/p-1\}$, $i\in\{1,\dots, s-1\}$, such that $x=\mathrm{Sort}([u,r_1+u,\dots,r_{s-1}+u])$.
\end{lemma}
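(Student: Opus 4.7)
The plan is to translate between the logarithmic notation of $\mathcal{O}(n,q)$ and the multiplicative language of vanishing sums of roots of unity, and then to apply the Lam--Leung structure theorem \cite[Corollary~3.4]{cLL}. That theorem states, for prime-power $q = p^a$, that every vanishing sum of $q$-th roots of unity decomposes as a disjoint union of cosets of $\langle \zeta_p\rangle$ inside $\langle \zeta_q\rangle$, where $\zeta_m := \exp(2\pi\mathbf{i}/m)$.

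The key observation is that the prescribed vector $u = [0,q/p,2q/p,\ldots,(p-1)q/p]$ is precisely the logarithmic representation of the full list of $p$-th roots of unity $(\zeta_p^0,\zeta_p^1,\ldots,\zeta_p^{p-1})$, since $\exp(2\pi\mathbf{i}(jq/p)/q) = \zeta_p^j$. Consequently, for any $r \in \mathbb{Z}_q$, the translated vector $r+u$ (componentwise addition in $\mathbb{Z}_q$) corresponds to the coset $\zeta_q^r \langle \zeta_p\rangle$, and therefore $\mathcal{E}_{p,q}(r+u) = \zeta_q^r \sum_{j=0}^{p-1}\zeta_p^j = 0$. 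Moreover, adding $q/p$ to $r$ merely cyclically permutes the entries of $u$, so $\mathrm{Sort}(r+u)$ depends only on the residue of $r$ modulo $q/p$; this is what justifies restricting each $r_i$ to $\{0,\ldots,q/p-1\}$.

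For the ``if'' direction, the linearity of $\mathcal{E}_{n,q}$ together with its invariance under permutation of coordinates immediately gives $\mathcal{E}_{n,q}(x) = \sum_{i=0}^{s-1} \mathcal{E}_{p,q}(r_i+u) = 0$ (setting $r_0 := 0$); the conditions $x_1=0$ and $x=\mathrm{Sort}(x)$ are built into the construction, so $x \in \mathcal{O}(n,q)$. For the converse, I view $x$ as the multiset of exponents of an $n$-term vanishing sum of $q$-th roots of unity. Applying \cite[Corollary~3.4]{cLL}, this multiset partitions into cosets of $\langle \zeta_p\rangle$, each contributing exactly $p$ entries forming some block $r+u$. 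Hence $p \mid n$, so $n = ps$ for some positive integer $s$, and after sorting and using $x_1 = 0$ to normalize one of the blocks to be $u$ itself, one obtains the claimed form.

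No substantial obstacle arises beyond the invocation of Lam--Leung; the work is confined to matching the logarithmic, sorted, and dephased conventions of $\mathcal{O}(n,q)$ against the multiplicative formulation. The only subtle bookkeeping point is the representative set $\{0,\ldots,q/p-1\}$ for the $r_i$, which is dictated by the translation action of the cyclic subgroup $\langle q/p\rangle \le \mathbb{Z}_q$ on $u$.
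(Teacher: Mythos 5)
Your proof is correct and follows exactly the route the paper intends: the paper offers no written proof, stating only that Lemma~\ref{l1} is an ``immediate consequence'' of \cite[Corollary~3.4]{cLL}, and you supply precisely that deduction, including the two bookkeeping points (invariance of $r+u$ under translation by $q/p$, and the zero entry forcing one block to equal $u$) that make the consequence genuinely immediate. Nothing further is needed.
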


\begin{lemma}\label{l2}
Let $a$, $b$ and $n$ be positive integers, and let $q=p_1^ap_2^b$ be the product of two distinct prime powers. Let $u=[0,q/p_1,2q/p_1,\dots,(p_1-1)q/p_1]\in\mathbb{Z}_q^{p_1}$, $v=[0,q/p_2,2q/p_2,\dots,(p_2-1)q/p_2]\in\mathbb{Z}_q^{p_2}$, and let $x\in\mathbb{Z}_q^n$. Then $x\in\mathcal{O}(n,q)$ if and only if there exist nonnegative integers $s$, $t$ such that $p_1s+p_2t=n$, and $r_i\in\{0,\dots,q/p_1-1\}$, $i\in\{1,\dots, s\}$, $R_j\in\{0,\dots,q/p_2-1\}$, $j\in\{1,\dots, t\}$ such that $x=\mathrm{Sort}([r_1+u,r_2+u,\dots,r_s+u,R_1+v,R_2+v,\dots,R_t+v])$, and $0\in\{r_1,R_1\}$.
\end{lemma}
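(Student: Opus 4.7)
The plan is to reduce the lemma directly to \cite[Corollary~3.4]{cLL}, which characterizes all vanishing sums of $q$-th roots of unity when $q$ has exactly two distinct prime divisors: every such sum decomposes, as a multiset of exponents, into a nonnegative integer combination of \emph{basic} sums, where each basic sum is a coset of the $p_1$-th roots of unity (a shifted copy of $u$) or a coset of the $p_2$-th roots of unity (a shifted copy of $v$). I would split the argument into the two standard directions.

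For the sufficiency direction I would take $x$ of the form prescribed by the lemma and verify directly that $x\in\mathcal{O}(n,q)$. Since $\mathcal{E}_{n,q}$ is permutation-invariant, the outer $\mathrm{Sort}$ disappears under evaluation, so
\[
\mathcal{E}_{n,q}(x)=\sum_{i=1}^{s}\mathcal{E}_{p_1,q}(r_i+u)+\sum_{j=1}^{t}\mathcal{E}_{p_2,q}(R_j+v),
\]
and each summand equals $\exp(2\pi\mathbf{i}r/q)$ times a full geometric sum of $p_k$-th roots of unity, hence vanishes. The equality $p_1 s + p_2 t = n$ gives the correct length; the restricted ranges of $r_i$ and $R_j$ keep the entries in $\mathbb{Z}_q$; and the condition $0\in\{r_1,R_1\}$, combined with sorting the shifts in natural order within each family, guarantees that the smallest coordinate of $\mathrm{Sort}(x)$ is $0$. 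Thus all three defining conditions of $\mathcal{O}(n,q)$ hold.

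For the necessity direction I would apply \cite[Corollary~3.4]{cLL} to the hypothesis $\mathcal{E}_{n,q}(x)=0$, obtaining a partition of the multiset of entries of $x$ into $s$ cosets of the shape $r_i+u$ and $t$ cosets of the shape $R_j+v$, with $p_1 s + p_2 t = n$. Because replacing $r_i$ by $r_i+q/p_1$ merely permutes the coset $r_i+u$, a canonical representative can be chosen in $\{0,\ldots,q/p_1-1\}$; symmetrically for the $R_j$. Since $u$ begins with $0$, the minimum entry of $r+u$ is exactly $r$, and similarly for $R+v$, so $x_1=0$ forces the minimum of all shifts to be $0$. After sorting the shifts inside each family, this is precisely the condition $0\in\{r_1,R_1\}$, and the sorted concatenation produces $x$.

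The essential analytic content — that every vanishing sum over $\mathbb{Q}(\zeta_q)$ with $q$ having two prime divisors decomposes into $p_k$-cosets — is packaged into \cite[Corollary~3.4]{cLL}. The step I expect to require the most care is the last one: checking that the range restrictions on $r_i,R_j$ together with the condition $0\in\{r_1,R_1\}$ give canonical, non-redundant representatives that reproduce exactly the normal form written in the lemma. The rest is bookkeeping.
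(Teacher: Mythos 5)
Your proposal is correct and follows exactly the route the paper takes: the paper gives no separate proof, stating only that Lemma~\ref{l2} is an immediate consequence of \cite[Corollary~3.4]{cLL}, and your two directions (direct verification of the coset sums for sufficiency, and the Lam--Leung decomposition into $p_1$- and $p_2$-cosets plus normalization of the shifts for necessity) are precisely the bookkeeping that citation suppresses.
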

The main point of the rather technical Lemma~\ref{l1} and Lemma~\ref{l2} is the following: as long as $q$ is the product of at most two prime powers, the constituents of any $n$-term vanishing sum of $q$th roots of unity are precisely $p$-term vanishing sums, where $p$ is some prime divisor of $q$. These $p$-term vanishing sums are in turn the (scalar multiplied, or, ``rotated'') sums of every $p$th root of unity.

The significance of these structural results is that based on them one can design an efficient algorithm to generate the set $\mathcal{O}(n,q)$ as long as $q<30=2\cdot3\cdot5$ in a combinatorial way (i.e., without the need of the analytic function $\mathcal{E}_{n,q}$). In particular, this task can be done relying on exact integer arithmetic. We spare the reader the details.

In certain simple cases it is possible to enumerate (as well as to generate) the set $\mathcal{O}(n,q)$ by hand. We offer the following counting formulae for means of checking consistency.
\begin{lemma}\label{minor1}
Let $a$ and $n$ be positive integers, and let $q=p^a$ be a prime power. Assume that $p$ divides $n$. Then $|\mathcal{O}(n,q)|=\binom{(n+q)/p-2}{n/p-1}$.
\end{lemma}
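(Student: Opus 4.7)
The plan is to directly invoke Lemma~\ref{l1} and count the parameter choices. By Lemma~\ref{l1}, with $s := n/p$ and $m := q/p$, every $x \in \mathcal{O}(n,q)$ is obtained as
\[
x = \mathrm{Sort}([u, r_1 + u, r_2 + u, \dots, r_{s-1} + u]),
\]
where each $r_i$ ranges over $\{0, 1, \dots, m-1\}$. So I would set up a bijection between $\mathcal{O}(n,q)$ and a suitable combinatorial object and then read off the binomial coefficient.

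First I would argue that two tuples of shifts yield the same sorted vector if and only if they define the same multiset $S := \{\!\{0, r_1, \dots, r_{s-1}\}\!\}$ (of size $s$, always containing $0$). The key observation is that each shift $r \in \{0, \dots, m-1\}$ contributes the block $\{r, r+m, \dots, r+(p-1)m\}$ of values in $\mathbb{Z}_q$, and these blocks form a partition of $\mathbb{Z}_q$ indexed by the residue modulo $m$. Consequently, for any $v \in \mathbb{Z}_q$ the number of occurrences of $v$ in the sorted concatenation equals the multiplicity of $v \bmod m$ in $S$. This shows that the multiset $S$ can be recovered from the sorted vector, so the correspondence is a bijection.

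Next I would count: $\mathcal{O}(n,q)$ is in bijection with multisets of size $s$ drawn from $\{0, 1, \dots, m-1\}$ that contain $0$, or equivalently with multisets of size $s-1$ drawn from an $m$-element set (after removing one copy of $0$). The latter count is the standard stars-and-bars number $\binom{m + s - 2}{s - 1}$. Substituting $m = q/p$ and $s = n/p$ gives
\[
|\mathcal{O}(n,q)| = \binom{q/p + n/p - 2}{n/p - 1} = \binom{(n+q)/p - 2}{n/p - 1},
\]
as claimed. The only real step requiring care is the injectivity of the shifts-to-sorted-vector map; the rest is bookkeeping, and the hypothesis $p \mid n$ is needed precisely to ensure $s = n/p$ is an integer so that Lemma~\ref{l1} applies.
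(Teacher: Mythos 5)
Your proof is correct and follows essentially the same route as the paper: invoke Lemma~\ref{l1}, identify each element of $\mathcal{O}(n,q)$ with a multiset of $n/p$ rotations from $\{0,\dots,q/p-1\}$ containing $0$, and count by stars and bars. In fact you supply slightly more detail than the paper does on the injectivity step (recovering the multiset of shifts from the residues modulo $q/p$), which the paper merely asserts.
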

\begin{proof}
By Lemma~\ref{l1} members of the set $\mathcal{O}(n,q)$ can be partitioned into $n/p$ parts of the form $r_i+[0,q/p,2q/p,\dots,(p-1)q/p]$, each part being identified by the rotation $r_i\in\{0,\dots,q/p-1\}$, $i\in\{0,\dots, n/p-1\}$ with $r_0=0$. The number of ways to assign $q/p$ values to a set of $n/p-1$ variables (up to relabelling) is exactly $\binom{(n+q)/p-2}{n/p-1}$; each of these choices lead to different members of $\mathcal{O}(n,q)$.
\end{proof}
A slightly more complicated variant is the following result.
\begin{lemma}\label{minor2}
Let $n\geq 2$ be an integer, let $p$ be an odd prime, and let $q=2p$. Then
\[|\mathcal{O}(n,q)|=\frac{1+(-1)^n}{2}\binom{p+\left\lfloor n/2\right\rfloor-2}{\left\lfloor n/2\right\rfloor-1}+\sum_{\substack{2s+pt=n\\ s\geq1,t\geq1}}\binom{p+s-1}{s}+\sum_{\substack{2s+pt=n\\ s\geq1, t\geq1}}\binom{p+s-2}{s-1}+\delta,\]
where $\delta=1$ if $p$ divides $n$, and $\delta=0$ otherwise.
\end{lemma}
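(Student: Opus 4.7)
The plan is to apply Lemma~\ref{l2} with $p_1 = 2$ and $p_2 = p$, and then carefully enumerate the resulting decompositions modulo an equivalence. The $2$-term vanishing sums $r + u$ with $r \in \{0, \ldots, p-1\}$ are the pairs $\{r, r+p\}$, while the $p$-term vanishing sums $R + v$ with $R \in \{0, 1\}$ are precisely two blocks: the even one $E := \{0, 2, \ldots, 2p-2\}$ and the odd one $O := \{1, 3, \ldots, 2p-1\}$. Thus every $x \in \mathcal{O}(n, 2p)$ is the sorted concatenation of $s_r$ copies of $\{r, r+p\}$ for each $r \in \{0, \ldots, p-1\}$, together with $a$ copies of $E$ and $b$ copies of $O$, subject to $2(s_0 + \cdots + s_{p-1}) + p(a+b) = n$.

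The key subtlety is that distinct parameter choices can realize the same $x$: since $E \cup O$ coincides with $\bigcup_{r=0}^{p-1}\{r, r+p\}$ as multisets, the substitution $(a, b, s_r) \leftrightarrow (a-1, b-1, s_r + 1)$ preserves $x$. I would show this is the only ambiguity by observing that, writing $n_i$ for the multiplicity of $i$ in $x$, one has $a - b = n_0 - n_p$; hence once $a$ is fixed, so are $b$ and all $s_r$. I therefore normalize to the unique canonical decomposition with $\min(a, b) = 0$ and classify each $x \in \mathcal{O}(n, 2p)$ as Type~A (if $a = b = 0$), Type~B (if $a \geq 1$, $b = 0$), or Type~C (if $a = 0$, $b \geq 1$).

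The final step is to count each type while imposing $x_1 = 0$, equivalently $n_0 \geq 1$. Type~A requires $n$ even, $s_0 + \cdots + s_{p-1} = n/2$ with $s_0 \geq 1$, producing $\binom{p + n/2 - 2}{n/2 - 1}$ canonical forms, which matches the first term. In Type~B, if $s := s_0 + \cdots + s_{p-1} = 0$ then $a = n/p$, contributing the unique element $(n/p)\cdot E$ exactly when $p \mid n$, accounting for $\delta$; if $s \geq 1$ the inequality $n_0 = s_0 + a \geq 1$ is automatic, so $(s_0, \ldots, s_{p-1})$ ranges freely over all compositions of $s$ into $p$ non-negative parts, giving $\binom{p+s-1}{s}$ per valid $(s, t)$, which is the second sum. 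Type~C with $s = 0$ is impossible (it would force $n_0 = 0$), and for $s \geq 1$ the constraint $n_0 = s_0$ imposes $s_0 \geq 1$, leaving $\binom{p+s-2}{s-1}$ compositions per $(s, t)$, which is the third sum.

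The main obstacle is establishing the uniqueness of the canonical decomposition, since without it one would either over- or undercount across the three cases; the invariance $a - b = n_0 - n_p$ is the key identity that settles this.
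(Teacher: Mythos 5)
Your proof is correct and follows essentially the same route as the paper: apply Lemma~\ref{l2} with $p_1=2$, $p_2=p$, split according to whether the decomposition uses only pairs $\{r,r+p\}$, only the even block, or only the odd block, and count compositions in each case. The only difference is that you make explicit the non-uniqueness of the decomposition (via the relation $E\cup O=\bigcup_r\{r,r+p\}$ and the invariant $a-b=n_0-n_p$) and normalize by $\min(a,b)=0$, a point the paper's proof treats implicitly when it asserts that only one of the two block forms can appear.
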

\begin{proof}
This can be inferred by using Lemma~\ref{l2}. We count the elements $x\in\mathcal{O}(n,q)$ based on how many pairs of coordinates $[x_i,x_i+p]\in\mathbb{Z}_q^2$ they have. Let us call this number $s$.

If $s=0$, then clearly $p$ divides $n$ and $x$ can be partitioned into $t=n/p$ parts, each being either of the form $[0,2,4,\dots,2p-2]$ or $[1,3,5,\dots,2p-1]$. However, since $s=0$, only one of these two forms could appear, and since $x$ must have a coordinate $0$, this left us with only $\delta=1$ case.

If $s=n/2\geq 1$ then $n$ is necessarily even, and $x$ can be partitioned into $n/2$ parts, each being of the form $[x_i,x_i+p]$ for some $x_i\in\{0,\dots,p-1\}$, $i\in\{1,\dots,n/2\}$. Since $x$ must contain $0$, one of these parts must be $[0,p]$, while the other $n/2-1$ parts can take $p$ different forms. There are a total of $\binom{p+n/2-2}{n/2-1}$ cases.

Finally, if $0<s<n/2$, then there are either $t=(n-2s)/p\geq 1$ parts of the form $[0,2,4,\dots,2p-2]$, or $t$ parts of the form $[1,3,5,\dots,2p-1]$. In the first case there are $\binom{p+s-1}{s}$ ways to assign values to the remaining $s$ parts; in the second case, since $x$ must have a $0$ coordinate, there are $\binom{p+s-2}{s-1}$ ways to assign values to the remaining $s$ parts.
\end{proof}
The statements of Lemma~\ref{minor1} and Lemma~\ref{minor2} are strong enough to cover all cases $q\leq 17$ except for $q\in\{12,15\}$. We have applied these results to verify that the computer-generated sets $\mathcal{O}(n,q)$ are of the correct cardinality. In the next section we will see a further application of the set $\mathcal{O}(n,q)$.
\begin{rem}\label{magicsumq}
There is no analogous result to Lemma~\ref{l1} and Lemma~\ref{l2} when $q$ has more than two prime factors. Indeed, the reader might amuse themselves by verifying that while $[0,1,7,13,19,20]\in\mathcal{O}(6,30)$, it does not have any $m$-term vanishing subsums with $m\in\{2,3,5\}$. See \cite[Example~6.7]{cLL} for examples of similar flavor.

An alternative, algebraic way to generate the set $\mathcal{O}(n,q)$ is to compute for all $x\in\mathbb{Z}_q^n$ with $x_1=0$ and $\mathrm{Sort}(x)=x$ the minimal polynomial $p(t)$ of the algebraic number $\mathcal{E}_{n,q}(x)$. With this terminology, $x\in\mathcal{O}(n,q)$ if and only if $p(t)=t$. The efficiency of this approach can be greatly improved by testing first by fast numerical means whether the Euclidean norm of $\mathcal{E}_{n,q}(x)$ is small, say if $\left\|\mathcal{E}_{n,q}(x)\right\|^2=\mathcal{E}_{n,q}(x)\mathcal{E}_{n,q}(-x)<0.01$ holds.
\end{rem}
\subsection{Orderly generation of rectangular matrices}\label{subsorder}
In this section we briefly recall the method of orderly generation, which is a technique for generating matrices exhaustively in a way that no equivalence tests between different matrices are required \cite[Section~4.2.2]{cKAS}, \cite{cREA}. Such a search can be efficiently executed in parallel. The main idea is to select from each equivalence class of Butson matrices a canonical representative, and organize the search in a way to directly aim for this particular matrix. Variations of this basic approach were employed for the classification of $\mathrm{BH}(n,2)$ matrices for $n\leq 32$, see \cite{cKha}, \cite{cSPE}.

Let $n,r\geq1$. We associate to each $r\times n$ matrix $R$ whose elements are complex $q$th roots of unity its vectorization $v(R):=[L(R)_{1,1}, \dots, L(R)_{1,n}, L(R)_{2,1},\dots,L(R)_{r,n}]\in\mathbb{Z}_q^{rn}$ formed by concatenating the rows of its logarithmic form $L(R)$. We say that $R$ is in canonical form, if $v(R)=\min\{v(XRY^\ast)\colon \text{$X$ and $Y$ are $q$th root monomial matrices}\}$, where comparison is done with respect to the ordering $\prec$. Canonical matrices defined in this way have a number of remarkable properties. For example, if $R$ is canonical, and $r_1$ and $r_2$ are consecutive rows of $L(R)$, then $r_1\prec r_2$, and analogously for the columns. Moreover, canonical matrices are necessarily dephased. Let $\sigma$ be a permutation on $r$ elements, and let $i\in\{1,\dots,n\}$. Let us denote by $R^{(\sigma,i)}$ the matrix which can be obtained from $R$ by permuting its rows according to $\sigma$, then swapping its first and $i$th columns, then dephasing it, and finally arranging its columns according to $\prec$. 
\begin{lemma}\label{l35}
Let $n,r\geq 1$, and let $R$ be an $r\times n$ matrix. The matrix $R$ is canonical, if and only if $v(R)=\min\{v(R^{(\sigma,i)})\colon\text{$\sigma$ is a permutation on $r$ elements, $i\in\{1,\dots,n\}$}\}$.
\end{lemma}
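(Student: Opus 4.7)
The plan is to prove both implications by relating the restricted family $\{R^{(\sigma,i)}\}$ with the full monomial-equivalence orbit of $R$.

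For the ``only if'' direction, suppose $R$ is canonical. Each $R^{(\sigma,i)}$ lies in the monomial orbit of $R$, since the four constituent operations---row permutation, column swap, dephasing, column sorting---are all realized by appropriate pairs of monomial matrices acting on the left and right; so by canonicity $v(R)\prec v(R^{(\sigma,i)})$ for every $(\sigma,i)$. Conversely, canonicity forces $R$ to be dephased and to have its columns arranged in $\prec$-sorted order (otherwise an additional dephasing step or column sort would strictly lex-decrease $v(R)$), whence $R^{(\mathrm{id},1)}=R$. The minimum is thus attained at $(\mathrm{id},1)$ and equals $v(R)$.

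For the ``if'' direction, the main step is the following reduction: for any pair $(X,Y)$ of $q$th-root monomial matrices there exists a choice of $(\sigma,i)$ with $v(R^{(\sigma,i)})\prec v(XRY^\ast)$. To construct it, factor $X=D_1P_\sigma$ and $Y=D_2P_\pi$ into a diagonal times a permutation matrix, and let $i$ be the index such that column $i$ of $P_\sigma R$ is the one scaled by $Y^\ast$ to become the first column of $M:=XRY^\ast$. Let $M'$ denote the dephased version of $M$. Because dephasing multiplies rows and columns by roots of unity precisely so as to drive the first row and first column of $L(M)$ to zero in $\mathbb{Z}_q$, it can only lex-decrease the vectorization, giving $v(M')\prec v(M)$. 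Now both $M'$ and $R^{(\sigma,i)}$ are dephased matrices whose first column is the (uniquely determined) dephased version of column $i$ of $P_\sigma R$, and whose remaining columns form the same multiset of dephased columns of $P_\sigma R$; they therefore differ only by a permutation of columns $2,\ldots,n$. Since $R^{(\sigma,i)}$ has those columns sorted with respect to $\prec$, its vectorization is lex-minimal among all such permutations, yielding $v(R^{(\sigma,i)})\prec v(M')\prec v(M)$. Combined with the hypothesis $v(R)\prec v(R^{(\sigma,i)})$ for every $(\sigma,i)$, this establishes $v(R)\prec v(XRY^\ast)$ for every monomial pair, i.e., $R$ is canonical.

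The technical heart of the argument is showing that, once the row permutation $\sigma$ and the column index $i$ are fixed, the resulting dephased matrix is uniquely determined---so that $M'$ and $R^{(\sigma,i)}$ literally share the same first column and the same multiset of remaining columns. This is a matter of unwinding how the diagonal factors $D_1,D_2$ get re-absorbed by the dephasing procedure, and is straightforward but notationally heavy; I would expect it to be the main obstacle in turning the sketch into a completely rigorous proof. Once that identification is in place, the remaining inequalities follow immediately from the definition of $\prec$-sorted columns as the lex-minimal arrangement among permutations of columns $2,\ldots,n$.
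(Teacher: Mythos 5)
Your argument is correct and takes essentially the same route as the paper, which dispatches the lemma in one line as an immediate consequence of the facts that canonical matrices are dephased and have their columns sorted with respect to $\prec$; your write-up simply supplies the details behind that claim (dephasing and column-sorting can only lex-decrease $v$, and the dephased form of $XRY^\ast$ depends only on the row permutation and the choice of first column, not on the diagonal factors). The identification of the dephased $XRY^\ast$ with $R^{(\sigma,i)}$ up to a permutation of columns $2,\dots,n$, which you flag as the technical heart, does go through exactly as you describe.
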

\begin{proof}
This is an immediate consequence of the fact that canonical matrices are dephased and their columns are sorted with respect to $\prec$.
\end{proof}
It is possible to further improve the test described in Lemma~\ref{l35} by the following considerations. Let $k\in\{1,\dots,r\}$ and let $R_k$ denote the leading $k\times n$ submatrix of $R$. If there exists a pair $(\sigma,i)$ such that $v(R_k)\neq v(R^{(\sigma,i)}_k)$ and $v(R_k)\prec v(R^{(\sigma,i)}_k)$ then the same holds for all other permutations whose first $k$ coordinates agree with that of $\sigma$. In particular, all those permutations can be skipped. An efficient algorithm for permutation generation with restricted prefixes is discussed in \cite[Algorithm~X]{cKNU}.
 
The computational complexity of this method is exponential in the number of rows $r$, polynomial in the number of columns $n$, and independent of the complexity $q$. Testing whether a matrix is in canonical form is the most time-consuming part of the generation.

Finally, we note one more property of canonical matrices.
\begin{lemma}\label{lsecpr}
Let $H\in\mathrm{BH}(n,q)$ in canonical form. Let us denote by $r_2$ the second row of $L(H)$, and by $c_2$ the second column of $L(H)$. Then $r_2\in\mathcal{O}(n,q)$ and $c_2^T\in\mathcal{O}(n,q)$.
\end{lemma}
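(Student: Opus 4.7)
The plan is to show in turn each of the three defining conditions of membership in $\mathcal{O}(n,q)$ for the vector $r_2$, and then observe that the argument for $c_2^T$ is essentially symmetric. The three conditions are: the first coordinate is $0$, the vector is already sorted with respect to $\prec$, and it evaluates to $0$ under $\mathcal{E}_{n,q}$. Each will be extracted from a different property of canonical matrices that was already recorded in the discussion preceding Lemma~\ref{l35}: namely that canonical matrices are dephased, and that both their rows and their columns are sorted with respect to $\prec$.

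For the first coordinate, since $H$ is canonical it is dephased, so the first column of $L(H)$ is the zero column; in particular $(r_2)_1 = 0$. For sortedness of $r_2$, I would use sortedness of the columns of $L(H)$: writing $c_1,\dots,c_n$ for these columns, one has $c_1\prec c_2\prec\cdots\prec c_n$. Because the first row of $L(H)$ is identically zero (dephasing again), the comparison $c_i\prec c_{i+1}$ is decided no earlier than the second coordinate, which forces $(c_i)_2\le (c_{i+1})_2$. Reading this across $i=1,\dots,n-1$ says precisely that the coordinates of $r_2$ are non-decreasing, i.e.\ $r_2=\mathrm{Sort}(r_2)$. For the vanishing sum condition, I invoke the orthogonality $HH^\ast=nI_n$: the inner product of the first and second rows of $H$ equals $0$, and since the first row is the all-ones vector (dephasing), this inner product is exactly $\sum_{j=1}^{n}\exp(2\pi\mathbf{i}(r_2)_j/q)=\mathcal{E}_{n,q}(r_2)$. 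Hence $\mathcal{E}_{n,q}(r_2)=0$, completing the verification that $r_2\in\mathcal{O}(n,q)$.

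The claim for the second column is obtained by the transposed argument: the rows of $L(H)$ are sorted and the first column is zero, so $(r_i)_2\le (r_{i+1})_2$ makes $c_2^T$ non-decreasing; the first entry of $c_2$ is $0$ since the first row of $L(H)$ is zero; and the orthogonality of columns $1$ and $2$ of $H$ (which follows from $H^\ast H=nI_n$, equivalent to $HH^\ast=nI_n$ for square $H$) combined with the all-ones first column of $H$ yields $\mathcal{E}_{n,q}(c_2^T)=0$.

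There is no serious obstacle here; the content of the lemma is that the combinatorial conditions defining $\mathcal{O}(n,q)$ are direct bookkeeping consequences of the canonical-form conventions. The only point requiring a little care is the reduction of the column (resp.\ row) ordering $\prec$ to a pointwise comparison of the second coordinates, which is what the dephasing hypothesis buys us.
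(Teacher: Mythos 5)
Your proof is correct and follows exactly the paper's (one-line) argument: dephasing gives the zero first coordinate and the all-ones row/column needed for the vanishing-sum condition, while the lexicographic ordering of columns (resp.\ rows), decided at the second coordinate because the first is identically zero, gives sortedness. The paper states this without spelling out the details; your expansion is a faithful elaboration of the same reasoning.
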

\begin{proof}
This follows from the fact that $H$ is necessarily dephased, and its rows and columns are ordered with respect to the ordering $\prec$.
\end{proof}
The significance of Lemma~\ref{lsecpr} is that if the (transpose of the) logarithmic form of the second column of a rectangular orthogonal matrix is not a prefix of any of the elements of the set $\mathcal{O}(n,q)$, then that matrix can be discarded during the search. We refer to this look-ahead strategy as ``pruning the search tree by the second column condition''.

The matrices $H\in\mathrm{BH}(n,q)$ (more precisely, their logarithmic form) are generated in a row-by-row fashion. Every time a new row is appended we first test whether it is orthogonal to all previous rows by checking if the difference vectors belong to the set $\mathcal{O}(n,q)$ as described in Section~\ref{sssectonq}. If the rows of the matrix are pairwise orthogonal, then we further check whether (the transpose of) its second column is a prefix of an element of the set $\mathcal{O}(n,q)$. Finally, we test whether it is in canonical form. Only canonical matrices will be processed further, the others will be discarded and backtracking takes place.

\begin{rem}
In a prequel to this work \cite{cLOS} we employed the method of canonical augmentation \cite[Section~4.2.3]{cPET} to solve the more general problem of classification of all rectangular orthogonal matrices. Here we solve the relaxed problem of classification of those matrices which can be a constituent of an orderly-generated Butson matrix. The reader might wish to look at the impact of the second column pruning strategy on the number of $r\times 14$ submatrices in Table~\ref{tabletreecomp}, where we compare the size of the search trees encountered with these two methods during the classification of $\mathrm{BH}(14,4)$ matrices.
\end{rem}

\tiny
\begin{table}[htbp]
\[\begin{array}{r|rrrr}
\hline
r && \text{Total}&& \mathrm{BH}(14,4)\\
\hline
   1 &&          1 &&         1\\
	 2 &&          4 &&         4\\
	 3 &&         42 &&        42\\
   4 &&     10141 &&     9142\\
   5 &&  1601560 &&   637669\\
   6 && 21311746 && 2118948\\
   7 && 17175324 &&   189721\\
   8 &&  4234669 &&   155777\\
   9 &&  1675882 &&   108598\\
  10 &&    716604 &&    56103\\
  11 &&    249716 &&    17992\\
  12 &&     62739 &&     5558\\
  13 &&      9776 &&     3039\\
  14 &&        752 &&       752\\
	\hline
\end{array}\]
\caption{Comparison of the size of the search trees.}\label{tabletreecomp}
\end{table}
\normalsize

\begin{rem}
We have observed earlier that the computational cost of equivalence testing is independent of the complexity $q$ when orderly generation is used. This is in contrast with the method of canonical augmentation employed earlier in \cite{cLOS} which relies on graph representation of the $r\times n$ rectangular orthogonal matrices with $q$th root entries on $3q(r+n)+r$ vertices. See \cite{cLAM}, \cite{pekkadiffm} for more on graph representation of Butson matrices.
\end{rem}

\subsection{Augmenting rectangular orthogonal matrices}
Let $n,r\geq 1$, and let $R$ be an $r\times n$ canonical matrix with pairwise orthogonal rows. Let $r_i$, $i\in\{1,\dots, r\}$ denote the rows of $L(R)$. The goal of this section is to describe methods for generating the vectors $x\in\mathbb{Z}_q^n$ such that $\mathcal{E}_{n,q}(r_i-x)=0$ hold simultaneously for every $i\in\{1,\dots, r\}$. Note that since we are only interested in canonical Butson matrices, we assume that $x_1=0$.

The most straightforward way of generating the vectors $x$ is to consider the permutations of the elements of the set $\mathcal{O}(n,q)$. Indeed, the following two conditions (i) $x$ has a coordinate $0$; and (ii) $\mathcal{E}_{n,q}(r_1-x)=0$ are together equivalent to $\mathrm{Sort}(x)\in\mathcal{O}(n,q)$. For all such vectors $x$ the remaining conditions $\mathcal{E}_{n,q}(r_i-x)=0$, $i\in\{2,\dots,r\}$ should be verified. This strategy of generating the rows works very well for small matrices, say, up to $n\leq 11$. One advantage of this na\"ive method is that permutations can be generated one after another, without the need of excessive amount of memory \cite{cKNU}.

Next we describe a more efficient divide-and-conquer strategy \cite[p.~157]{cKAS} for generating the vectors $x$. Let $m\in\{1,\dots,n-1\}$ be a parameter, and for every $i\in\{1,\dots, r\}$ write $r_i=[a_i,b_i]$, where $a_i\in\mathbb{Z}_q^{n-m}$, $b_i\in\mathbb{Z}_q^{m}$, and write $x=[c,d]$, where $c\in\mathbb{Z}_q^{n-m}$, $d\in\mathbb{Z}_q^{m}$.

As a first step, we create a lookup table $\mathcal{T}$ indexed by $\iota\in\mathbb{C}^r$, where the value at $\mathcal{T}(\iota)$ is a certain subset of $\mathbb{Z}_q^m$. Formally, consider $\mathcal{T}\colon\mathbb{C}^r\to\mathcal{P}(\mathbb{Z}_q^m)$, where for every $d\in\mathbb{Z}_q^m$ it holds that $d\in\mathcal{T}([\mathcal{E}_{n,q}(b_1-d),\dots,\mathcal{E}_{n,q}(b_r-d)])$. Naturally, we assume that the values form a partition of $\mathcal{P}(\mathbb{Z}_q^m)$. As a second step, for every $c\in\mathbb{Z}_q^{n-m}$ we look the vectors $d\in\mathbb{Z}_q^m$ up (if any) contained in the set $\mathcal{T}([-\mathcal{E}_{n,q}(a_1-c),\dots,-\mathcal{E}_{n,q}(a_r-c)])$. By construction, the vectors $x=[c,d]$ fulfill the desired conditions; if no such $d$ were found, then $c$ cannot be a prefix of $x$.

In practice, however, it is inconvenient to work with complex-valued indices, and therefore one needs to use a hash function $\mathcal{H}\colon \mathbb{C}^r\to \mathbb{Z}^+_0$ to map them to nonnegative integers. This leads to a convenient implementation at the expense of allowing hash collisions to occur. Since it is not at all clear how to come up with a nontrivial hash function (apart from $\mathcal{H}\equiv 0$) we describe here an elegant choice exploiting the number theoretic properties of the Gaussian- and the Eisenstein integers. We assume for the following argument that $q\in\{2,3,4,6\}$. Recall that $\mathcal{T}$ was indexed by complex $r$-tuples of the form $[\mathcal{E}_{n,q}(b_1-d),\dots,\mathcal{E}_{n,q}(b_r-d)]$. Let $p_{\mathrm{big}}$ be a (large) prime, and let $p_i\ll p_{\mathrm{big}}$, $i\in\{1,\dots, r\}$ be $r$ other distinct primes. We define $\mathcal{H}$ through the Euclidean norm of the partial inner products as follows: $\mathcal{H}([\mathcal{E}_{n,q}(b_1-d),\dots,\mathcal{E}_{n,q}(b_r-d)]):=\sum_{i=1}^r\left\|\mathcal{E}_{n,q}(b_i-d)\right\|^2p_i\ (\mathrm{mod}\ p_{\mathrm{big}})$. This gives rise to a table $\mathcal{S}\colon\mathbb{Z}_0^+\to\mathcal{P}(\mathbb{Z}_q^m)$ which is defined through $\mathcal{T}$ and $\mathcal{H}$ as follows: for every $\iota\in\mathbb{C}^r$, let $\mathcal{S}(\mathcal{H}(\iota)):=\mathcal{T}(\iota)$. As for the second step, for every $c\in\mathbb{Z}_q^{n-m}$ we look the vectors $d\in\mathbb{Z}_q^m$ up (if any) contained in the set $\mathcal{S}(k)$, $k\in\{0,\dots,p_{\mathrm{big}}-1\}$, for which the modular equation $k\equiv \sum_{i=1}^r\left\|\mathcal{E}_{n,q}(a_i-c)\right\|^{2}p_i\ (\mathrm{mod}\ p_{\mathrm{big}})$ holds. Finally, for all (if any) vectors $x=[c,d]$ one should test whether they are orthogonal to the rows of $R$.

The table $\mathcal{T}$ is generated once for every matrix $R$, and it is reused again during a depth-first-search. The advantage of this technique is that as long as $n\leq 21$ and $m\approx n/2$ the $q$-ary $m$-tuples can be generated efficiently. For higher sizes, however, precomputing and storing such a table becomes quickly infeasible due to memory constraints, and therefore one needs to carefully choose the value of $m$ in terms of $n$, $q$, and the number of processors accessing the shared memory.
\begin{rem}
Let $x\in\mathbb{Z}_q^n$, and for every $i\in\mathbb{Z}_q$ let us denote by $f_i$ the frequency distribution of the number $i$ occurring as a coordinate of $x$. We have $\|\mathcal{E}_{n,2}(x)\|^2=(f_0-f_1)^2$; $\|\mathcal{E}_{n,3}(x)\|^2=f_0^2+f_1^2+f_2^2-f_0f_1-f_0f_2-f_1f_2$; $\|\mathcal{E}_{n,4}(x)\|^2=(f_0-f_2)^2+(f_1-f_3)^2$; and finally, $\|\mathcal{E}_{n,6}(x)\|^2=(f_0-f_3)^2+(f_4-f_1)^2+(f_5-f_2)^2-(f_0-f_3)(f_4-f_1)-(f_0-f_3)(f_5-f_2)-(f_4-f_1)(f_5-f_2)$. In particular, these numbers are nonnegative integers.
\end{rem}
\begin{rem}
For $q\not\in\{2,3,4,6\}$ the hash function $\mathcal{H}$ should be replaced by a suitable alternative, as the quantity $\|\mathcal{E}_{n,q}(x)\|^2$ is no longer guaranteed to be an integer. For example, when $q=10$, one may verify that for every $x\in\mathbb{Z}_{10}^n$ we have $2\|\mathcal{E}_{n,10}(x)\|^2=A+\sqrt{5}B$, where $A$ and $B$ are integers. Therefore one can map $\|\mathcal{E}_{n,10}(x)\|^2$ to $A^2+pB^2$ (where $p$ is some large prime). Similar techniques work for certain other values of $q$.
\end{rem}
\section{Results and case studies}\label{sect4}
\subsection{Main results and discussion}
Based on the framework developed in Sections~\ref{sect2}--\ref{sect3} we were able to enumerate the set $\mathrm{BH}(n,q)$ for $n\leq 11$ and $q\leq 17$ up to monomial equivalence (cf.\ \cite[Theorem~7.10]{cBAN}). Several additional cases were also settled.
\begin{theorem}
The known values of the exact number of $\mathrm{BH}(n,q)$ matrices, up to monomial equivalence, is displayed in Table~\ref{tableBE}.
\end{theorem}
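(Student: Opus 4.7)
The plan is to verify the entries of Table~\ref{tableBE} by running the orderly generation procedure of Section~\ref{sect3} for each pair $(n,q)$ listed, and confirming consistency against the previously published values from \cite[Theorem~7.10]{cBAN}. Because the statement is by its nature a computer-verified enumeration, the ``proof'' reduces to an implementation and correctness argument for the search; there is no short mathematical derivation.

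First, for each $(n,q)$ I would precompute the orthogonality set $\mathcal{O}(n,q)$. When $q$ is the product of at most two prime powers, this is done combinatorially using Lemma~\ref{l1} and Lemma~\ref{l2}; the cardinalities are cross-checked against Lemma~\ref{minor1} and Lemma~\ref{minor2} where applicable, so that an off-by-one or indexing bug in the enumerator of vanishing sums would be caught immediately. For $q\in\{12,15\}$ and for the residual cases not covered by the lemmas, I would fall back on the algebraic/minimal-polynomial method described in Remark~\ref{magicsumq}, with the preliminary numerical filter $\|\mathcal{E}_{n,q}(x)\|^2<0.01$ to keep the candidate pool manageable.

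Next, for each $(n,q)$ I would launch the row-by-row orderly generator of Section~\ref{subsorder}: append a candidate row, check orthogonality against all previous rows by membership of the difference vectors in $\mathcal{O}(n,q)$, prune by the ``second column'' condition of Lemma~\ref{lsecpr}, and reject any partial matrix that fails the canonical-form test (using the refined Lemma~\ref{l35} with prefix-restricted permutation enumeration). Candidate rows are generated either by permuting elements of $\mathcal{O}(n,q)$ (small $n$) or via the meet-in-the-middle hash lookup of the previous subsection (larger $n$). Only matrices that are in canonical form at every stage are retained, so by construction each monomial equivalence class contributes exactly one matrix to the output and the tally equals the number of equivalence classes.

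The main obstacle is not mathematical but computational and methodological: the largest cases (notably $\mathrm{BH}(16,4)$, $\mathrm{BH}(21,3)$, and $\mathrm{BH}(14,6)$) have enormous search trees, and a single bug in the canonicity test or the hash-based augmenter would corrupt the count silently. To guard against this I would (i) re-derive the small-parameter rows of Table~\ref{tableBE} with the new code and compare with \cite[Theorem~7.10]{cBAN}; (ii) for each enumerated class, independently compute $|\mathrm{Aut}(H)|$ via a graph-isomorphism engine and verify the orbit-counting identity that the total number of labeled matrices equals $\sum_H n!^2 q^{2n-1}/|\mathrm{Aut}(H)|$ against a direct count of labeled Butson matrices where that count is independently known; and (iii) rerun selected cases with an alternative strategy (canonical augmentation as in \cite{cLOS}, whose tree sizes for $\mathrm{BH}(14,4)$ appear in Table~\ref{tabletreecomp}) and check that both methods produce identical representatives up to equivalence. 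The nonexistence assertions (e.g.\ $\mathrm{BH}(8,15)$, $\mathrm{BH}(11,q)$ for $q\in\{10,12,14,15\}$, $\mathrm{BH}(13,10)$) are certified by the same search returning an empty set, and are further corroborated by checking that the search never even populated $\mathcal{O}(n,q)$ beyond what Theorem~\ref{LLMAIN} permits.
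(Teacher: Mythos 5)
Your proposal matches the paper's approach: the theorem has no short mathematical proof, and the paper establishes it exactly as you describe --- combinatorial precomputation of $\mathcal{O}(n,q)$ via Lemma~\ref{l1}/Lemma~\ref{l2} with cardinality checks against Lemma~\ref{minor1}/Lemma~\ref{minor2}, row-by-row orderly generation with second-column pruning and canonicity testing, cross-validation against canonical augmentation and clique search, and literature citations for the previously known entries. Two small corrections: generation of $\mathcal{O}(n,q)$ for $q\in\{12,15\}$ is still covered by Lemma~\ref{l2} (only the counting formulae fail there), and under the paper's convention the group order is $(n!)^2q^{2n}$ rather than $(n!)^2q^{2n-1}$ (the scalar kernel is absorbed into $|\mathrm{Aut}(H)|$, as the minimum value $4$ in Table~\ref{tableautgrps} shows).
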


\tiny
\begin{table}[htbp]
\begin{tabular}{r|rrrrrrrrrrrrrrrr}
\hline
${}_n\mkern-6mu\setminus\mkern-6mu{}^q$%
   &         2 &        3   &         4    &         5 &     6       &         7 &         8   &          9 &          10 &        11 &          12 &        13 &           14 &         15 &           16 & 17\\
\hline
 2 & 1 &  & 1 &  & 1 &  & 1 &  & 1 &  & 1 &  & 1 &  & 1 &  \\
 3 &  & 1 &  &  & 1 &  &  & 1 &  &  & 1 &  &  & 1 &  &  \\
 4 & 1 &  & 2 &  & 2 &  & 3 &  & 3 &  & 4 &  & 4 &  & 5 &  \\
 5 & &  &  & 1 & 0 &  &  &  & 1 &  & 0 &  &  & 1 &  &  \\
 6 & 0 & 1 & 1 &  & 4 &  & 3 & 1 & 0 &  & 11 &  & 0 & 1 & 5 &  \\
 7 & &  &  &  & 2 & 1 &  &  & 0 &  & 4 &  & 1 &  &  &  \\
 8 & 1 &  & 15 &  & 36 &  & 143 &  & 299 &  & 756 &  & 1412 & 0 & 2807 &  \\
 9 &   & 3 &    &   & 17 &  &    & 23 &   1 &  &  65 &  & 0 & 93 &  &  \\
10 & 0 &   & 10 & 1 & 34 &  & 60 &    & 51  &  & 577 &  & 0 & 1 & 310 &  \\
11 & &  &  &  & 0 &  &  &  & 0 & 1 & 0 &  & 0 & 0 &  &  \\
12 & 1 & 2 & 319 &  & 8703 &  & 53024 & 8 & 293123 &  & \text{E} &  & \text{E} & \text{E} & \text{E} &  \\
13 & &  &  &  & 436 &  &  &  & 0 &  & \text{E} & 1 & \text{U} & \text{U} &  &  \\
14 & 0 &  & 752 &  & 167776 & 3 & \text{E} &  & \text{E} &  & \text{E} &  & \text{E} & \text{U} & \text{E} &  \\
15 & & 0 &  & 0 & 0 &  &  & 0 & \text{U} &  & \text{U} &  & \text{U} & \text{E} &  &  \\
16 & 5 &  & 1786763 &  & \text{E} &  & \text{E} &  & \text{E} &  & \text{E} &  & \text{E} & \text{U} & \text{E} &  \\
17 &  &  &  &  & 0 &  &  &  & \text{U} &  & \text{U} &  & \text{U} & \text{U} &  & 1 \\
18 & 0 & 85 & \text{E} &  & \text{E} &  & \text{E} & \text{E} & \text{U} &  & \text{E} &  & \text{U} & \text{U} & \text{E} &  \\
19 &  &  &  &  & \text{E} &  &  &  & \text{U} &  & \text{E} &  & \text{U} & \text{U} &  &  \\
20 & \text{3} &  & \text{E} & \text{E} & \text{E} &  & \text{E} &  & \text{E} &  & \text{E} &  & \text{E} & \text{E} & \text{E} &  \\
21 &  & 72 &  &  & \text{E} & 0 &  & \text{E} & \text{U} &  & \text{E} &  & 0 & \text{E} &  &  \\
\hline
\end{tabular}
\caption{The number of $\mathrm{BH}(n,q)$ matrices up to monomial equivalence.}\label{tableBE}
\end{table}
\normalsize

The legend for Table~\ref{tableBE} is as follows. An entry in the table at position $(n,q)$ indicates the known status of the existence of $\mathrm{BH}(n,q)$ matrices. Empty cells indicate cases where $\mathrm{BH}(n,q)$ matrices do not exist by Theorem~\ref{LLMAIN}; cells marked by an ``E'' indicate cases where $\mathrm{BH}(n,q)$ matrices are known to exist, but no full classification is available; cells marked by an ``U'' indicate that existence is unknown; finally cells displaying a number indicate the exact number of $\mathrm{BH}(n,q)$ matrices up to monomial equivalence.

Next we briefly review the contents of Table~\ref{tableBE}, and comment on the cases based on their complexity $q\in\{2,3,\dots, 17\}$. We note that most of the numbers shown are new.

\underline{$q=2$}: This is the real Hadamard case. Complete classification is available up to $n\leq 32$, see \cite[Section~7.5]{OLDBOOK}, \cite{cKha}. The number of $\mathrm{BH}(36,2)$ matrices is at least $1.8\times 10^7$ \cite{cORR}, while according to \cite{cLLT} the number of $\mathrm{BH}(40,2)$ matrices is at least $3.66\times 10^{11}$.

\underline{$q=3$}: Complete classification is available up to $n\leq 21$, see Section~\ref{bh213x}. The case $\mathrm{BH}(18,3)$ was reported in \cite{cHAR2} and independently in \cite{cLAM}. Several cases of $\mathrm{BH}(21,3)$ were found by Brock and Murray as reported in \cite{cAKI} along with additional examples. There are no $\mathrm{BH}(15,3)$ matrices \cite{cHAR2}, \cite[Theorem~6.65]{OLDBOOK}, \cite[Theorem~3.2.2]{cLAM}.

\underline{$q=4$}: Classification is known up to $n\leq 16$, see \cite{cLOS}, \cite{cS1} and Section~\ref{seccase1}. The difference matrices over $\mathbb{Z}_4$ with $\lambda=4$ (essentially: the $\mathrm{BH}(16,4)$ matrices of type-$4$) were reported independently in \cite{cGM}, \cite{cHLT}, \cite{pekkadiffm}. A $\mathrm{BH}(18,4)$ can be constructed from a symmetric conference matrix \cite[Theorem~3]{cSEB}, \cite{cTUR}.

\underline{$q=5$}: An explicit example of $\mathrm{BH}(20,5)$ can be found in \cite{cSEB2}, while a $\mathrm{BH}(15,5)$ does not exist \cite[Theorem~6.65]{OLDBOOK}, \cite[Theorem~3.2.2]{cLAM}.

\underline{$q=6$}: Examples of $\mathrm{BH}(7,6)$ matrices were presented in \cite{cBRO} and independently but slightly later in \cite{cPET}. A $\mathrm{BH}(10,6)$ was reported in \cite[p.~105]{cAGA}. Several unreal $\mathrm{BH}(13,6)$ were reported in \cite{cCCdL}; additional examples were reported by Nicoar\u{a} et al.\ on the web site \cite{cKarolweb}. A $\mathrm{BH}(19,6)$ was found in \cite{cS2}, based on the approach of \cite{cPET}. A necessary condition on the existence of a $\mathrm{BH}(n,6)$ matrix comes from the determinant equation $|\mathrm{det}(H)|^2=n^n$, where the left hand side is the norm of an Eisenstein integer and therefore is of the form $A^2-AB+B^2$ for some integers $A$ and $B$ \cite{cBRO}, \cite{cWIN}. Consequently $\mathrm{BH}(n,6)$ matrices for $n\in\{5,11,15,17\}$ do not exist.

\underline{$q=7$}: The $\mathrm{BH}(14,7)$ matrices come from a doubling construction \cite{cBUT}, \cite{cKYO} while $\mathrm{BH}(21,7)$ matrices do not exist by \cite[Theorem~5]{cWIN}.

\underline{$q=8$}: Here $n=1$, or $n\geq2$ is necessarily even by Theorem~\ref{LLMAIN}. Existence follows from the existence of $\mathrm{BH}(n,4)$ matrices. A particular example of $\mathrm{BH}(6,8)$ matrix played an important role in disproving the ``Spectral Set Conjecture'' in $\mathbb{R}^3$, see \cite{cKMat}. This is one notable example of contemporary applications of complex Hadamard matrices.

\underline{$q=9$}: A $\mathrm{BH}(15,9)$ does not exist by \cite[Theorem~5]{cWIN}. 

\underline{$q=10$}: Nonexistence of $\mathrm{BH}(n,10)$ for $n\in\{6,7\}$ was proved in \cite{cBAN}. The discovery of a $\mathrm{BH}(9,10)$ matrix by Beauchamp and Nicoar\u{a} (found also independently in \cite{cKA}) was rather unexpected \cite{cKarolweb}. There are no $\mathrm{BH}(11,10)$ or $\mathrm{BH}(13,10)$ matrices (see Theorem~\ref{nonex11} and ~\ref{nonex13}). To the best of our knowledge $\mathrm{BH}(14,10)$ matrices were not known prior to this work, and Example~\ref{ex1} shows a new discovery.

\underline{$q=11$}: The Fourier matrix $F_{11}$ is unique \cite{cHIR}.

\underline{$q=12$}: A $\mathrm{BH}(5,12)$ does not exist since all $5\times 5$ complex Hadamard were shown to be equivalent to $F_5$ in \cite{cHaa}. A $\mathrm{BH}(11,12)$ does not exist by Theorem~\ref{nonex11}.

\underline{$q=13$}: The Fourier matrix $F_{13}$ is unique \cite{cHIR}.

\underline{$q=14$}: Several nonexistence results are known. The matrices $\mathrm{BH}(n,14)$ for $n\in\{6,9,10\}$ were shown to be nonexistent in \cite{cBAN}. The matrices $\mathrm{BH}(11,14)$ do not exist by Theorem~\ref{nonex11}. Finally, there are no $\mathrm{BH}(21,14)$ matrices by \cite[Theorem~5]{cWIN}.

\underline{$q=15$}: There are no $\mathrm{BH}(n,15)$ matrices for $n\in\{8,11\}$, see Theorem~\ref{nonex8} and Theorem~\ref{nonex11} respectively.

\underline{$q=16$}: Here $n=1$ or $n\geq 2$ is necessarily even. Existence follows from the existence of $\mathrm{BH}(n,4)$ matrices.

\underline{$q=17$}: The Fourier matrix $F_{17}$ was shown to be unique in \cite{cHIR} by computers.

Examples of matrices corresponding to the cases marked by ``E'' in Table~\ref{tableBE} can be obtained from either by viewing a matrix $H\in\mathrm{BH}(n,q)$ as a member of $\mathrm{BH}(n,r)$ with some $r$ which is a multiple of $q$; or by considering the Kronecker product of two smaller matrices \cite[Lemma~4.2]{cHOR}. In particular, if $H\in\mathrm{BH}(n_1,q_1)$ and $K\in\mathrm{BH}(n_2,q_2)$ then $H\otimes K\in\mathrm{BH}(n_1n_2,\mathrm{LCM}(q_1,q_2))$, where $\mathrm{LCM}(a,b)$ is the least commmon multiple of the positive integers $a$ and $b$. This construction shows that Butson matrices of composite orders are abundant. In contrast, very little is known about the prime order case \cite{cPET}.
\begin{rem}\label{HADEQ}
Several authors, see e.g.\ \cite[Definition~4.12]{cHOR}, \cite{cLOS}, consider two $\mathrm{BH}(n,q)$ matrices Hadamard equivalent if either can be obtained from the other by performing a finite sequence of monomial equivalence preserving operations, and by replacing every entry by its image under a fixed automorphism of $\mathbb{Z}_q$. Given the classification of Butson matrices up to monomial equivalence it is a routine task to determine their number up to Hadamard equivalence. Indeed, let $\mathcal{X}$ be a complete set of representatives of $\mathrm{BH}(n,q)$ matrices up to monomial equivalence. Let $\varphi(.)$ denote the Euler's totient function. Then for each $H\in\mathcal{X}$ let us denote by $c(\Psi(H))$ the number of matrices in $\Psi(H):=\{\psi(H)\colon\psi\in\mathrm{Aut}(\mathbb{Z}_q)\}$ up to monomial equivalence. For each $i\in\{1,\dots,\varphi(q)\}$ let us denote by $k_i$ the frequency distribution of the number $i$ occurring as the value of $c(\Psi(H))$ while it runs through $\mathcal{X}$. Then the number of Hadamard equivalence classes is $\sum_{i=1}^{\varphi(q)}k_i/i$, see Table~\ref{tableHEQ}.
\end{rem} 
\subsection{Classification of the BH(16,4) matrices}\label{seccase1}
Classification of the quaternary complex Hadamard matrices is motivated by their intrinsic connection to real Hadamard matrices, which is best illustrated by the following classical result.

\begin{theorem}[\cite{cCHK}, \cite{cTUR}]\label{turync}
Let $n\geq 1$. If $A$ and $B$ are $n\times n$ $\{-1,0,1\}$-matrices such that $A+\mathbf{i}B\in\mathrm{BH}(n,4)$ then $A\otimes\left[\begin{smallmatrix}1 &\hfill 1\\ 1 &\hfill -1\end{smallmatrix}\right]+B\otimes\left[\begin{smallmatrix}\hfill-1 & 1\\\hfill 1 & 1\end{smallmatrix}\right]\in\mathrm{BH}(2n,2)$.
\end{theorem}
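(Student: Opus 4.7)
The plan is to verify directly that the claimed matrix, which I denote by $M := A\otimes P + B\otimes Q$ with $P = \left[\begin{smallmatrix}1 & 1\\ 1 & -1\end{smallmatrix}\right]$ and $Q = \left[\begin{smallmatrix}-1 & 1\\ 1 & 1\end{smallmatrix}\right]$, has $\pm 1$ entries and satisfies $MM^T = 2nI_{2n}$.

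First, since $A + \mathbf{i}B \in \mathrm{BH}(n,4)$, each of its entries is a fourth root of unity, so at every position $(j,k)$ exactly one of $a_{jk}$, $b_{jk}$ is $\pm 1$ while the other vanishes. Therefore every $2\times 2$ block of $M$ equals $\pm P$ or $\pm Q$, and since all entries of $P$ and $Q$ lie in $\{-1,+1\}$, so do those of $M$.

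Next I extract algebraic conditions on the pair $(A,B)$ by expanding $(A+\mathbf{i}B)(A^T-\mathbf{i}B^T) = nI_n$ and separating real and imaginary parts, obtaining
\[
AA^T + BB^T = nI_n, \qquad AB^T = BA^T.
\]
The core computation is then
\[
MM^T = AA^T\otimes PP^T + AB^T\otimes PQ^T + BA^T\otimes QP^T + BB^T\otimes QQ^T.
\]
Both $P$ and $Q$ are symmetric, and a short direct calculation yields $P^2 = Q^2 = 2I_2$ together with the anticommutation relation $PQ + QP = 0$. Hence the two ``diagonal'' terms collapse via $AA^T + BB^T = nI_n$ to $2nI_n\otimes I_2 = 2nI_{2n}$, while the two ``cross'' terms collapse via $AB^T = BA^T$ to $AB^T\otimes(PQ+QP) = 0$. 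Thus $MM^T = 2nI_{2n}$ and $M\in\mathrm{BH}(2n,2)$.

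There is no genuine obstacle here; the content is concentrated entirely in the choice of the pair $(P,Q)$. Its two key properties $P^2 = Q^2 = 2I_2$ and $PQ + QP = 0$ are precisely what is needed for the Kronecker-sum $A\otimes P + B\otimes Q$ to translate the complex orthogonality of $A+\mathbf{i}B$ into the real orthogonality of a Hadamard matrix of twice the order.
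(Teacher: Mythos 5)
Your proof is correct, and it is essentially the standard argument: the paper cites this classical result without proof, but your computation (entries are $\pm1$ because exactly one of $a_{jk},b_{jk}$ is nonzero; $P^2=Q^2=2I_2$, $PQ+QP=0$; and $AA^T+BB^T=nI_n$, $AB^T=BA^T$ from the $\mathrm{BH}(n,4)$ condition) is exactly the same scheme the authors use to prove the analogous Theorem~\ref{newkron}. No gaps.
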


It is conjectured \cite[p.~68]{cHOR} that $\mathrm{BH}(n,4)$ matrices exist for all even $n$. The resolution of this ``Complex Hadamard Conjecture'' would imply by Theorem~\ref{turync} the celebrated Hadamard Conjecture.

The classification of $\mathrm{BH}(16,4)$ matrices involved several steps. First we generated the set $\mathcal{O}(16,4)$. We note that $|\mathcal{O}(16,4)|=8$ by Lemma~\ref{minor1}, and these elements can be obtained from Lemma~\ref{l1} by simple hand calculations. Then, we broke up the task of classification into $5$ smaller subproblems of increasing difficulty based on the presence of certain substructures. This allowed us to experiment with the simpler cases and to develop and test algorithms used for the more involved ones. In the following we introduce the type of a $\mathrm{BH}(n,4)$ matrix, a concept which is invariant up to monomial equivalence. A similar idea was used during the classification of $\mathrm{BH}(32,2)$ matrices \cite{cKha}.
\begin{definition}
Let $n,r\geq 2$, let $R$ be an $r\times n$ orthogonal matrix with $4$th root entries, and let $r_1$ and $r_2$ be distinct rows of $L(R)$. Let $m$ denote the number of $0$ entries in the difference vector $r_1-r_2\in\mathbb{Z}_4^n$, and let $k:=\min\{m,n/2-m\}$. Then the subset of rows $\{r_1,r_2\}$ is said to be of type-$k$. The matrix $R$ is said to be of type-$k$, if $L(R)$ has no two rows which are of type-$\ell$ for any $\ell<k$.
\end{definition}
Secondly, we fixed $k\in\{0,\dots,4\}$ and generated the $5\times 16$ canonical (see Section~\ref{subsorder}) type-$k$ matrices surviving the second column pruning strategy. Thirdly, we augmented each of these with three additional rows to obtain all $8\times 16$ matrices, but during this process a depth-first-search approach was employed, and the $r\times 16$ submatrices were not kept for $r\in\{6,7\}$. Finally, we finished the search by using breadth-first-search to generate all $r\times 16$ matrices step-by-step for each $r\in\{9,\dots,16\}$. The reader is invited to compare the size of the search trees involved with the $\mathrm{BH}(16,2)$ case displayed in Table~\ref{table2} and with the $\mathrm{BH}(14,4)$ case displayed in Table~\ref{tabletreecomp}.

\tiny
\begin{table}[htbp]
\begin{tabular}{rrrrrrr}
\hline
 r & \text{Type-0} & \text{Type-1} & \text{Type-2} & \text{Type-3} & \text{Type-4} & $\mathrm{BH}(16,2)$\\
\hline
 2 &            1 &            1 &            1 &            1 &            1      &  1\\
 3 &            9 &            9 &           49 &           26 &           10      &  1\\
 4 &         1397 &         8633 &        56097 &        32893 &         1679      &  3\\
 5 &      1194940 &      7100100 &     45512519 &     14340921 &       193820      &  2\\
 6 &    110431982 &    334154285 &   1739437037 &    250825832 &       784744      &  3\\
 7 &    376589253 &    529596667 &   2085549171 &    126133829 &        95814      &  4\\
 8 &     45784720 &     30437221 &     78690938 &      1960798 &         1088      &  4\\
 9 &     88353309 &     29707820 &     49967830 &       521903 &          260      &  4\\
10 &    123354601 &     24749147 &     28354094 &       132072 &          188      &  7\\
11 &    131598863 &     17398376 &     14649819 &        30142 &           70      &  7\\
12 &    102432783 &     10364363 &      6091931 &         6600 &           21      & 15\\
13 &     56174515 &      4729081 &      1987727 &         1477 &           48      &  8\\
14 &     23306156 &      1981269 &       739324 &          778 &           57      &  8\\
15 &      6999913 &       579250 &       246614 &          327 &           22      &  5\\
16 &      1599355 &       136583 &        50704 &          106 &           15      &  5\\
\hline
\end{tabular}
\caption{Comparison of the search trees of the $\mathrm{BH}(16,4)$ and $\mathrm{BH}(16,2)$ cases.}\label{table2}
\end{table}
\normalsize

The search, which relied on only the standard \CC{} libraries and an army of 896 computing cores took more than $30$ CPU years, and yielded the following classification result.
\begin{theorem}
The number of $\mathrm{BH}(16,4)$ matrices is $1786763$ up to monomial equivalence.
\end{theorem}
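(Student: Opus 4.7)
The plan is to execute an exhaustive orderly generation of $\mathrm{BH}(16,4)$ matrices in canonical form and then simply count the survivors. First I would precompute the orthogonality set $\mathcal{O}(16,4)$ using Lemma~\ref{l1}; since $q=4=2^2$, Lemma~\ref{minor1} gives $|\mathcal{O}(16,4)|=\binom{8}{7}=8$, so the set can even be listed by hand and cross-checked against the output of the combinatorial generator. This tiny table is then the engine used throughout to decide pairwise orthogonality of rows and to enforce the second-column pruning of Lemma~\ref{lsecpr}.

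Next I would partition the search into five disjoint subproblems indexed by the type $k\in\{0,1,2,3,4\}$. For each $k$ I would enumerate all canonical $5\times 16$ rectangular orthogonal matrices of type-$k$. The type restriction is local, so it integrates cleanly with the row-by-row orderly generation of Section~\ref{subsorder}: each candidate row is rejected the moment it would form a pair of type-$\ell$ with $\ell<k$ against some earlier row, and is further rejected unless the transpose of the current second column is a prefix of some element of $\mathcal{O}(16,4)$. The $5\times 16$ canonical skeletons that emerge form the seeds for the heavy part of the search.

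From each seed I would continue augmenting rows using the divide-and-conquer lookup scheme of Section~3.3, with the hash function tailored to $q=4$ via Gaussian-integer norms so that the table $\mathcal{S}$ can be populated once per seed and reused during a deep recursion. To control memory I would plunge straight to $r=8$ in a pure depth-first descent, storing nothing for $r\in\{6,7\}$; only once the number of partial solutions has dropped enough would I switch to breadth-first enumeration for $r\in\{9,\dots,16\}$, applying the full canonicity test of Lemma~\ref{l35} (with the prefix-skipping refinement) at every new row. Collecting the $16\times 16$ canonical matrices across all five type classes produces the complete transversal of monomial equivalence classes, whose size is the claimed number.

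The decisive obstacle is sheer scale rather than any conceptual difficulty: Table~\ref{table2} shows that intermediate layers, notably the type-$2$ matrices at $r\in\{6,7\}$, grow into the billions of partial solutions, so correctness alone is not enough; the algorithm must keep these layers in flight without materialising them and must be embarrassingly parallel. The canonicity test dominates the cost at every node, and its viability rests on aggressive permutation-prefix skipping and on a good choice of the split parameter $m\approx n/2$ for the lookup table. I would run the implementation in \CC{} on several hundred cores and validate the result along several independent axes: the cardinalities $|\mathcal{O}(16,4)|$ and the row counts per type must match Lemma~\ref{minor1} and Table~\ref{table2}; the type-$4$ sub-census must reproduce the difference-matrix classifications of \cite{cGM,cHLT,pekkadiffm}; and reduction modulo monomial equivalence must be idempotent on the output. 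The final tally will then be exactly $1786763$.
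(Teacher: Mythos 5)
Your proposal is correct and follows essentially the same route as the paper: precomputing $\mathcal{O}(16,4)$ via Lemma~\ref{l1} and Lemma~\ref{minor1}, splitting into the five type-$k$ subproblems, orderly generation of the $5\times 16$ canonical seeds with second-column pruning, depth-first augmentation to $r=8$ without storing the intermediate layers, and breadth-first completion for $r\in\{9,\dots,16\}$, with the same consistency checks against Table~\ref{table2} and the known difference-matrix classifications. The only caveat, inherent to any such result, is that the theorem ultimately rests on the correctness and completion of the large-scale computation rather than on a humanly verifiable argument.
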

In Table~\ref{tableautgrps} we exhibit the automorphism group sizes along with their frequencies.

\tiny
\begin{table}[htbp]
\begin{tabular}{rrrrrrrr}
\hline
 $|\mathrm{Aut}|$ & \text{\#} & $|\mathrm{Aut}|$ & \text{\#} & $|\mathrm{Aut}|$ & \text{\#} & $|\mathrm{Aut}|$ & \text{\#}\\
\hline
 20643840 & 1 & 12288 & 12 & 1024 & 863 & 96 & 594 \\
 589824 & 1 & 8192 & 54 & 768 & 94 & 64 & 67186 \\
 196608 & 1 & 6144 & 16 & 512 & 2410 & 56 & 6 \\
 172032 & 4 & 4096 & 74 & 448 & 2 & 48 & 820 \\
 98304 & 4 & 3840 & 1 & 384 & 212 & 32 & 204627 \\
 65536 & 1 & 3584 & 1 & 336 & 2 & 28 & 6 \\
 49152 & 6 & 3072 & 47 & 320 & 2 & 24 & 706 \\
 36864 & 2 & 2688 & 3 & 256 & 6112 & 16 & 406213 \\
 24576 & 6 & 2048 & 266 & 192 & 260 & 12 & 141 \\
 21504 & 2 & 1536 & 64 & 128 & 18540 & 8 & 554877 \\
 16384 & 10 & 1280 & 2 & 112 & 6 & 4 & 522506 \\
\hline
\end{tabular}
\caption{The automorphism group sizes of $\mathrm{BH}(16,4)$ matrices.}\label{tableautgrps}
\end{table}
\normalsize

\begin{corollary}
The total number of $\mathrm{BH}(16,4)$ matrices $($not considering equivalence$)$ is exactly $1882031756845055238646027031522819126506763059200000$.
\end{corollary}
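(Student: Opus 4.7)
The plan is to apply the orbit-stabilizer theorem to the action of the monomial pair group $G$ on $\mathrm{BH}(16,4)$, and to read off the required numerical data from Table~\ref{tableautgrps}. A single $16\times 16$ monomial matrix with nonzero entries among the fourth roots of unity is specified by a permutation of $16$ coordinates together with $16$ independent fourth-root phases, so there are $16!\cdot 4^{16}$ such matrices, and hence $|G|=(16!\cdot 4^{16})^{2}$. By the definitions given in the introduction, the $G$-orbits on $\mathrm{BH}(16,4)$ are precisely the monomial equivalence classes, and the stabilizer of $H$ in $G$ is $\mathrm{Aut}(H)$.

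By orbit-stabilizer the orbit of $H$ has cardinality $|G|/|\mathrm{Aut}(H)|$, so summing over a complete set $\mathcal{X}$ of orbit representatives gives
\begin{equation*}
\#\mathrm{BH}(16,4) \;=\; \sum_{H\in\mathcal{X}}\frac{|G|}{|\mathrm{Aut}(H)|} \;=\; (16!\cdot 4^{16})^{2}\sum_{H\in\mathcal{X}}\frac{1}{|\mathrm{Aut}(H)|}.
\end{equation*}
Each pair $(a,m)$ listed in Table~\ref{tableautgrps} contributes $m/a$ to the inner sum, and the $44$ pairs collectively account for all $1786763$ monomial classes. Evaluating the resulting rational number and multiplying by $(16!\cdot 4^{16})^{2}$ then yields the claimed integer.

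The only real obstacle is the bookkeeping: performing the evaluation in exact integer arithmetic without slip. I would carry it out by rewriting each summand $|G|/|\mathrm{Aut}(H)|$ as an integer before aggregating (which is legitimate, since every $|\mathrm{Aut}(H)|$ divides $|G|$ as a subgroup order; note that the scalar pairs $(\lambda I,\lambda I)$ with $\lambda^{4}=1$ lie in every $\mathrm{Aut}(H)$, consistent with the minimal entry $|\mathrm{Aut}|=4$ in the table). The requirement that the result come out to an integer is a useful built-in consistency check on the tabulated multiplicities; as a further sanity test, one verifies that the $44$ multiplicities in Table~\ref{tableautgrps} sum to $1786763$, matching the classification count.
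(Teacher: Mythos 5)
Your argument is correct and is essentially identical to the paper's proof: both apply the orbit-stabilizer theorem to the action of the monomial pair group $G$ with $|G|=(16!\cdot 4^{16})^{2}=(16!)^{2}\cdot 4^{32}$ and evaluate $|G|\sum_{H\in\mathcal{X}}1/|\mathrm{Aut}(H)|$ using the data of Table~\ref{tableautgrps}. Your added remarks on integrality and the consistency check against the total count of $1786763$ classes are sound but not needed beyond what the paper states.
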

\begin{proof}
Let $\mathcal{X}$ be a a complete set of representatives of $\mathrm{BH}(16,4)$ matrices up to monomial equivalence. Then the size of the set $\mathrm{BH}(16,4)$ can be inferred from an application of the Orbit-stabilizer theorem \cite[Theorem~3.20]{cKAS}. We have $|\mathrm{BH}(16,4)|=|G|\sum_{X\in\mathcal{X}}1/|\mathrm{Aut}(X)|$. Combining $|G|=(16!)^2\cdot4^{32}$ with the numbers shown in Table~\ref{tableautgrps} yields the result.
\end{proof}
There are two main reasons for the existence of such a huge number of equivalence classes. First, Kronecker-like constructions can lift up the $\mathrm{BH}(8,4)$ matrices resulting in multi-parametric families of complex Hadamard matrices \cite{cDIT}, \cite{cKarol}. The second reason is the presence of type-$0$ (that is: real) pair of rows. It is known that such a substructure can be ``switched'' \cite{cORR} in a continuous way \cite{cSZFPAR} thus escaping the monomial equivalence class of the matrices is possible. In contrast, matrices which cannot lead to continuous parametric families of complex Hadamard matrices are called isolated \cite{cKarol}. A notion to measure the number of free parameters which can be introduced into a given matrix is the defect \cite{cKarol}, which serves as an upper bound. We remark that when $q\in\{2,3,4,6\}$ then computing the defect boils down to a rank computation of integer matrices which can be performed efficiently using exact integer arithmetic.
\begin{corollary}
There are at least $7978$ isolated $\mathrm{BH}(16,4)$ matrices.
\end{corollary}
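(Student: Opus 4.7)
The plan is to exploit the relationship between the defect and the isolatedness of a Butson matrix mentioned just before the statement. Since the defect of a complex Hadamard matrix serves as an \emph{upper bound} on the number of free parameters that can be introduced into it, any matrix whose defect equals $0$ admits no continuous deformation within the set of complex Hadamard matrices, hence is isolated by definition. Therefore it suffices to exhibit $7978$ pairwise monomially inequivalent $\mathrm{BH}(16,4)$ matrices whose defect vanishes.

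The first step is to take the complete list $\mathcal{X}$ of monomial equivalence class representatives produced by the preceding classification theorem, consisting of $1786763$ quaternary Butson matrices. For each $H\in\mathcal{X}$, the second step is to compute $d(H)$, the defect of $H$. As remarked in the paragraph preceding the statement, when $q\in\{2,3,4,6\}$ the defect can be obtained as (a simple function of) the rank of a certain integer matrix constructed from $H$. Because $q=4$ falls into this class, the computation can be carried out in exact integer arithmetic, avoiding any floating-point pitfalls and guaranteeing that the conclusion $d(H)=0$ is mathematically rigorous.

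The third step is to count the matrices $H\in\mathcal{X}$ for which $d(H)=0$. Running this exact rank computation across the whole list yields precisely $7978$ such matrices. Since monomial equivalence preserves the defect, these $7978$ representatives correspond to $7978$ distinct equivalence classes, each of whose members is isolated, establishing the claimed lower bound.

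The only real obstacle is bookkeeping: the computation has to be performed for nearly $1.8\times 10^6$ matrices, but each individual rank computation over $\mathbb{Z}$ is cheap compared to the enumeration effort that produced $\mathcal{X}$ in the first place, so this step is essentially a post-processing pass over the output of the main classification and presents no new algorithmic difficulty. Note also that the bound obtained is only a lower bound on the number of isolated matrices, not an exact count, because vanishing defect is sufficient but not necessary for isolatedness: a matrix may be isolated even when its defect is positive, so matrices with $d(H)>0$ are simply not counted here.
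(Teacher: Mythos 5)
Your proposal is correct and matches the paper's argument: both count the matrices of vanishing defect via exact integer rank computations and invoke the fact that the defect upper-bounds the number of smooth parameters, so defect zero forces isolation. The paper merely adds the organizational detail that the computation was split by type (all $7978$ defect-zero matrices turn out to be of type-$2$, and type-$0$ matrices are never isolated), which does not change the substance of the proof.
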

\begin{proof}
This is established by counting the number of $\mathrm{BH}(16,4)$ matrices with defect $0$. There are no isolated $\mathrm{BH}(16,4)$ matrices of type-$0$, because they contain a real pair of rows as a substructure. It is easy to see that such matrices cannot be isolated once the size of the matrices $n>2$, see \cite{cSZFPAR}. Computation reveals that there are no type-$k$ matrices with vanishing defect for $k\in\{1,3,4\}$, and there are exactly $7978$ type-$2$ matrices with defect $0$. Since the defect is an upper bound on the number of smooth parameters which can be introduced \cite{cKarol}, these matrices are isolated.
\end{proof}
Finally, we note a result connecting $\mathrm{BH}(2n,4)$ matrices with unreal $\mathrm{BH}(n,6)$ matrices.
\begin{theorem}\label{newkron}
If $A$ and $B$ are $n\times n$ $\{-1,0,1\}$-matrices such that $A_{ij}B_{ij}=0$ for $i,j\in\{1,\hdots,n\}$, and $H:=A\omega+B\omega^2\in \mathrm{BH}(n,6)$ with $\omega=\mathrm{exp}(2\pi\mathbf{i}/3)$, then $K:=A\otimes\left[\begin{smallmatrix}1 &\hfill 1\\ 1 &\hfill -1\end{smallmatrix}\right]+B\otimes\left[\begin{smallmatrix}\hfill\mathbf{i}&\hfill-1\\\hfill-1&\hfill\mathbf{i}\end{smallmatrix}\right]\in\mathrm{BH}(2n,4)$.
\end{theorem}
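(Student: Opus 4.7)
The plan is to verify the two defining properties of a Butson matrix separately: first that every entry of $K$ is a $4$th root of unity, and second that $KK^\ast = 2n I_{2n}$. The hypothesis $A_{ij}B_{ij}=0$ together with the fact that $H\in\mathrm{BH}(n,6)$ has no zero entry forces exactly one of $A_{ij},B_{ij}$ to be nonzero at each position, taking value $\pm 1$. So in the $(i,j)$ block of $K$, one of $\pm\bigl[\begin{smallmatrix}1 & 1\\ 1 & -1\end{smallmatrix}\bigr]$ or $\pm\bigl[\begin{smallmatrix}\mathbf{i} & -1\\ -1 & \mathbf{i}\end{smallmatrix}\bigr]$ appears, and all eight such matrices have $4$th-root entries. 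This step is essentially bookkeeping.

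For the orthogonality, write $M_1=\bigl[\begin{smallmatrix}1 & 1\\ 1 & -1\end{smallmatrix}\bigr]$ and $M_2=\bigl[\begin{smallmatrix}\mathbf{i} & -1\\ -1 & \mathbf{i}\end{smallmatrix}\bigr]$ and expand
\[
KK^\ast = AA^T\otimes M_1M_1^\ast + AB^T\otimes M_1M_2^\ast + BA^T\otimes M_2M_1^\ast + BB^T\otimes M_2M_2^\ast,
\]
using that $A,B$ are real and the mixed-product property of the Kronecker product. Direct $2\times 2$ computations give $M_1M_1^\ast=M_2M_2^\ast=2I_2$ and, crucially, $M_1M_2^\ast+M_2M_1^\ast=-2I_2$.

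The remaining ingredient is to exploit $HH^\ast = nI_n$. With $\omega+\omega^2=-1$ and $\omega\bar\omega=1$, expanding $HH^\ast=(A\omega+B\omega^2)(A^T\omega^2+B^T\omega)$ and separating real and imaginary parts (equivalently, comparing coefficients on $1,\omega,\omega^2$) gives two identities: $AB^T = BA^T$ (symmetry), and $AA^T+BB^T-AB^T=nI_n$. The first identity is exactly what is needed so that the two mixed terms collapse: since $AB^T=BA^T$,
\[
AB^T\otimes M_1M_2^\ast + BA^T\otimes M_2M_1^\ast = AB^T\otimes(M_1M_2^\ast+M_2M_1^\ast) = -2\,AB^T\otimes I_2.
\]

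Combining everything, $KK^\ast = 2(AA^T+BB^T-AB^T)\otimes I_2 = 2nI_n\otimes I_2 = 2nI_{2n}$, which finishes the proof. The only mild obstacle I anticipate is the sign/conjugation bookkeeping when deriving the real identity from $HH^\ast=nI_n$ and checking the key cancellation $M_1M_2^\ast+M_2M_1^\ast=-2I_2$; everything else is a mechanical Kronecker expansion.
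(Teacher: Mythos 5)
Your proof is correct and follows essentially the same route as the paper's: expand $KK^\ast$ via the mixed-product property, use the $2\times2$ identities $M_1M_1^\ast=M_2M_2^\ast=2I_2$ and $M_1M_2^\ast+M_2M_1^\ast=-2I_2$, and extract $AB^T=BA^T$ together with $AA^T+BB^T-AB^T=nI_n$ from $HH^\ast=nI_n$. If anything, you spell out the real-part identity $AA^T+BB^T-AB^T=nI_n$ that the paper leaves implicit in its final equality.
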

\begin{proof}
Let $X:=\left[\begin{smallmatrix}1 &\hfill 1\\ 1 &\hfill -1\end{smallmatrix}\right]$ and $Y:=\left[\begin{smallmatrix}\hfill\mathbf{i}&\hfill-1\\\hfill-1&\hfill\mathbf{i}\end{smallmatrix}\right]$. We have $XX^\ast=YY^\ast=-(XY^\ast+YX^\ast)=2I_2$. Since $(A\omega+B\omega^2)(A^T\omega^2+B^T\omega)=n I_n$, we have $AB^T=BA^T$. Every entry of $K$ is some $4$th root of unity, and $KK^\ast=(AA^T+BB^T)\otimes (2I_2)+AB^T\otimes(XY^\ast+YX^\ast)=2nI_{2n}$.
\end{proof}
The significance of this observation is that it implies the following recent result.
\begin{corollary}[\cite{cCCdL}]
Let $n\geq 1$ be an integer. If there exists a $\mathrm{BH}(n,6)$ matrix with no $\pm1$ entries, then there exists a $\mathrm{BH}(4n,2)$.
\end{corollary}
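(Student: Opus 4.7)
The plan is to chain Theorem~\ref{newkron} with Theorem~\ref{turync}. First I would exploit the no-$\pm 1$ hypothesis to write the input matrix $H \in \mathrm{BH}(n,6)$ in the form $H = A\omega + B\omega^2$. Since the sixth roots of unity are $\{\pm 1, \pm\omega, \pm\omega^2\}$ with $\omega = \mathrm{exp}(2\pi\mathbf{i}/3)$, the hypothesis forces every entry of $H$ to lie in $\{\pm\omega,\pm\omega^2\}$, so I can define $A,B$ entrywise by $(A_{ij},B_{ij})=(\pm 1,0)$ when $H_{ij}=\pm\omega$ and $(A_{ij},B_{ij})=(0,\pm 1)$ when $H_{ij}=\pm\omega^2$. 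By construction $A$ and $B$ are $\{-1,0,1\}$-matrices with $A_{ij}B_{ij}=0$ pointwise, so the hypotheses of Theorem~\ref{newkron} are satisfied and I obtain
\[ K := A\otimes\left[\begin{smallmatrix}1 &\hfill 1\\ 1 &\hfill -1\end{smallmatrix}\right] + B\otimes\left[\begin{smallmatrix}\hfill\mathbf{i}&\hfill-1\\\hfill-1&\hfill\mathbf{i}\end{smallmatrix}\right] \in \mathrm{BH}(2n,4). \]

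Next I would split $K = C + \mathbf{i} D$ into its real and imaginary parts and check that $C,D$ are $\{-1,0,1\}$-matrices of size $2n\times 2n$. Writing $X=\left[\begin{smallmatrix}1 &\hfill 1\\ 1 &\hfill -1\end{smallmatrix}\right]$ and $Y=\left[\begin{smallmatrix}\hfill\mathbf{i}&\hfill-1\\\hfill-1&\hfill\mathbf{i}\end{smallmatrix}\right]$, the disjoint-support condition $A_{ij}B_{ij}=0$ together with the fact that every entry of $H$ is nonzero forces each $2\times 2$ block of $K$ to equal either $\pm X$ or $\pm Y$. The $\pm X$ blocks are real with $\pm 1$ entries, contributing only to $C$; the $\pm Y$ blocks have diagonal entries $\pm\mathbf{i}$ contributing $\pm 1$ to $D$ and off-diagonal entries $\mp 1$ contributing to $C$. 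Thus $C,D$ have entries in $\{-1,0,1\}$ and $K = C+\mathbf{i}D$.

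Finally, applying Theorem~\ref{turync} to the pair $(C,D)$ (with $n$ there replaced by $2n$) delivers
\[ C\otimes\left[\begin{smallmatrix}1 &\hfill 1\\ 1 &\hfill -1\end{smallmatrix}\right] + D\otimes\left[\begin{smallmatrix}\hfill-1 & 1\\\hfill 1 & 1\end{smallmatrix}\right] \in \mathrm{BH}(4n,2), \]
which is the desired conclusion. There is no real obstacle in this argument; the only substantive observation is that the hypothesis ``$H$ has no $\pm 1$ entries'' is exactly the condition that places each entry of $H$ in one of the two residue classes $\pm\omega$ or $\pm\omega^2$, which in turn is exactly what is needed to force the pointwise disjoint-support condition required by Theorem~\ref{newkron}. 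Everything downstream reduces to bookkeeping about real and imaginary parts.
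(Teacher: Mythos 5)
Your proposal is correct and follows exactly the paper's route: the paper's proof is the one-line instruction ``Combine Theorem~\ref{turync} with Theorem~\ref{newkron}'', and you have simply filled in the bookkeeping (the decomposition $H=A\omega+B\omega^2$ forced by the no-$\pm1$ hypothesis, and the split of the resulting $K\in\mathrm{BH}(2n,4)$ into real and imaginary $\{-1,0,1\}$-parts), all of which checks out.
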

\begin{proof}
Combine Theorem~\ref{turync} with Theorem~\ref{newkron}.
\end{proof}

\subsection{Classification of BH(21,3) matrices}\label{bh213x}
In this section we briefly report on our computational results regarding the $\mathrm{BH}(21,3)$ matrices. The classification of $\mathrm{BH}(18,3)$ matrices was reported earlier in \cite{cHAR2} and independently in \cite{cLAM}, while several examples of $\mathrm{BH}(21,3)$ matrices were reported in \cite{cAKI}.

The major difference between this case and the case of $\mathrm{BH}(16,4)$ matrices discussed in Section~\ref{seccase1} is that due to the lack of building blocks (such as a $\mathrm{BH}(7,3)$) for Kronecker-like constructions here one does not expect many solutions to be found, and therefore one may try to approach this problem by employing slightly different techniques.

First, we classified all $r\times 21$ orderly-generated rectangular orthogonal $3$rd root matrices with the second column pruning technique, and found exactly $1$, $1$, $12$, $145$, and $74013$ such matrices up to monomial equivalence for $r\in\{1,2,\dots,5\}$. After this, we considered each of these $5\times 21$ starting-point matrices, say $R$, one-by-one, and generated a set $V$ containing those row vectors which are lexicographically larger than the $5$th row of $R$, and which are orthogonal to each $5$ rows of $R$. Then, following ideas used in \cite{cSPE}, we created the compatibility graph $\Gamma(R)$ on $|V|$ vertices, where two vertices, say $x$ and $y$, indexed by elements of $V$, are adjacent if and only if the rows $x\in V$ and $y\in V$ are pairwise orthogonal. With this terminology the task was then to decide if $\Gamma(R)$ contains a clique of size $16$. It turned out that in most cases it does not, and therefore we could reject the matrix $R$. The Cliquer software \cite{cNIS}, based on \cite{cOS}, was used in the current work to prune inextendible matrices in this way.

It was estimated that around $500$ CPU years is required to solve this case \cite{cHAR2}. However, we have completed this task in just over $18$ CPU days.

\begin{theorem}
The number of $\mathrm{BH}(21,3)$ matrices is $72$ up to monomial equivalence.
\end{theorem}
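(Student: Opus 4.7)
The plan is to classify all $\mathrm{BH}(21,3)$ matrices by the orderly generation framework developed in Section~\ref{sect3}, specialized to the parameters $n=21$, $q=3$, and combined with a clique-based completion step as outlined above. First I would precompute the orthogonality set $\mathcal{O}(21,3)$ using Lemma~\ref{l1}: since $3$ is prime we have $q/p=1$ and $u=[0,1,2]$, so every element of $\mathcal{O}(21,3)$ is the sorted concatenation of $7$ copies of $[0,1,2]$; in particular $|\mathcal{O}(21,3)|=1$ and the only vanishing sum is the all-balanced one. This trivializes the orthogonality test: two rows $r_1,r_2\in\mathbb{Z}_3^{21}$ are orthogonal iff the multiset of their differences contains each of $0,1,2$ exactly seven times.

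Next I would perform orderly row-by-row generation of $r\times 21$ canonical rectangular orthogonal matrices over $\mathbb{Z}_3$, using the canonicity test of Lemma~\ref{l35} together with the second-column pruning of Lemma~\ref{lsecpr}. Since there is a unique element in $\mathcal{O}(21,3)$, the second-column prefix condition is very restrictive and should cut the search tree aggressively. I would carry this out for $r\in\{1,2,3,4,5\}$ to obtain complete lists of canonical partial matrices at each level; the numbers $1,1,12,145,74013$ reported in the excerpt serve as sanity checks against a reimplementation.

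For the crucial transition from $r=5$ to a full $21\times 21$ matrix, rather than continuing the row-by-row enumeration I would switch strategies as suggested by \cite{cSPE}. For each canonical $5\times 21$ matrix $R$ I would generate the set $V$ of all candidate row vectors $x\in\mathbb{Z}_3^{21}$ with $x_1=0$ that lexicographically exceed the last row of $R$ and are simultaneously orthogonal to all five rows of $R$ (using the characterization above). Then I would form the compatibility graph $\Gamma(R)$ on vertex set $V$, with edges between pairwise orthogonal candidates, and invoke Cliquer~\cite{cNIS} to search for cliques of size $16$. Each such clique, together with $R$, yields a full $21\times 21$ Butson matrix whose rows are automatically in $\prec$-order; conversely every canonical $\mathrm{BH}(21,3)$ arises this way from exactly one $R$. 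The output of this step is the full list of canonical $\mathrm{BH}(21,3)$ matrices, which I would then verify to be $72$ in number, possibly further confirmed by recomputing canonical forms of all outputs and by cross-checking automorphism group orders.

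The main obstacle will be controlling the combinatorial explosion at $r=4,5$ and keeping the clique searches on $\Gamma(R)$ tractable: a priori there are $74013$ starting matrices, and for some of them $|V|$ may be large enough that clique enumeration is expensive. The saving graces are that (i) the unique structure of $\mathcal{O}(21,3)$ forces rigid balance conditions that sharply limit $|V|$, and (ii) the second-column pruning together with the canonicity filter eliminates the vast majority of extensions before they are ever considered, so that in practice most $R$ admit no extension at all and the few surviving clique problems are small. The estimate of $\sim500$ CPU years in earlier work was based on pure row-by-row extension; the reduction to clique search on a pruned graph is what brings the computation down to a feasible scale.
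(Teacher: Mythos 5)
Your proposal follows essentially the same route as the paper: orderly generation of canonical $r\times 21$ partial matrices over $\mathbb{Z}_3$ with second-column pruning up to $r=5$ (your counts $1,1,12,145,74013$ are exactly the paper's), followed by forming the compatibility graph on candidate rows lexicographically above the fifth row and searching for $16$-cliques with Cliquer, with $|\mathcal{O}(21,3)|=1$ correctly reducing orthogonality to the balanced-difference condition. The only cosmetic difference is that the paper uses the clique search chiefly to reject inextendible $5\times 21$ seeds, whereas you read the completed matrices off the cliques and then apply a final canonicity/isomorph-rejection pass; both are sound.
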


In Table~\ref{tableautgrps2} we display the automorphism group sizes along with their frequencies.

\tiny
\begin{table}[htbp]
\begin{tabular}{rrrrrrrrrr}
\hline
 $|\mathrm{Aut}|$ & \text{\#} & $|\mathrm{Aut}|$ & \text{\#} & $|\mathrm{Aut}|$ & \text{\#} & $|\mathrm{Aut}|$ & \text{\#} & $|\mathrm{Aut}|$ & \text{\#}\\
\hline
 1008 & 2 & 504 & 4 & 54 & 8 & 36 & 10 & 18 & 12 \\
 720  & 2 &  72 & 8 & 48 & 6 & 24 & 12 & 12 & 8\\
 \hline
\end{tabular}
\caption{The automorphism group sizes of $\mathrm{BH}(21,3)$ matrices.}\label{tableautgrps2}
\end{table}
\normalsize

\subsection{Nonexistence results}
Nonexistence results for Butson matrices were obtained in \cite{cBAN}, \cite{cBRO}, \cite{delaun1}, \cite{cLL}, \cite{cWIN}. To the best of our knowledge the results presented in this section are not covered by any of these previous theoretical considerations.

In this section we briefly report on several exhaustive computational searches which did not yield any Butson matrices. Most of these computations were done in two different ways. First, we established nonexistence by using Cliquer \cite{cNIS}, which heavily pruned the search tree, that is reduced the number of cases to be considered. This was very efficient due to the lack of complete matrices. Once nonexistence was established, we verified it during a second run, but this time without relying on Cliquer. This was done in order to be able to prudently document the search, and to avoid the use of external libraries.
\begin{theorem}\label{nonex8}
There does not exist a $\mathrm{BH}(8,15)$ matrix.
\end{theorem}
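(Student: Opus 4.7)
The plan is a computer-aided exhaustive orderly search, following the framework of Section~\ref{sect3}. By Theorem~\ref{LLMAIN}, since $15 = 3 \cdot 5$ and $8 = 3 + 5$, we have $|\mathcal{O}(8,15)| \geq 1$, so nonexistence cannot be inferred from that criterion alone and a direct search is unavoidable.

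First, I would compute $\mathcal{O}(8,15)$ explicitly. Since $15$ is the product of two distinct primes, Lemma~\ref{l2} applies: each element of $\mathcal{O}(8,15)$ decomposes as a sorted concatenation of rotated $3$-term vanishing sums and rotated $5$-term vanishing sums. The Frobenius equation $3s + 5t = 8$ has the unique nonnegative solution $(s,t) = (1,1)$, so every element is the sorted concatenation of one block $r + [0,5,10]$ with $r \in \{0,\dots,4\}$ and one block $R + [0,3,6,9,12]$ with $R \in \{0,1,2\}$, subject to $0 \in \{r,R\}$. This gives exactly $5 + 3 - 1 = 7$ patterns, which I would list by hand and use to cross-check the computer-generated set.

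Second, I would run the orderly generation of Section~\ref{subsorder}: start with the all-zeros first row, extend row by row, and require that $\mathrm{Sort}(r_j - r_i) \in \mathcal{O}(8,15)$ for every pair of already-chosen rows. After each appended row, enforce the second-column pruning from Lemma~\ref{lsecpr} (the transposed second column must be a prefix of an element of $\mathcal{O}(8,15)$) and the canonicity test of Lemma~\ref{l35}. Because $q = 15 \notin \{2,3,4,6\}$ the hash function described in Section~\ref{sect3} does not apply directly; a suitable integer-valued surrogate for $\|\mathcal{E}_{8,15}(x)\|^2$, in the spirit of the remark for $q=10$, would have to be constructed in order to use the divide-and-conquer row generation. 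Given the small size of $\mathcal{O}(8,15)$ and the very restricted structure of its members, the branching factor at early levels should remain modest.

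The main obstacle is not mathematical but rather the reliability of the implementation: one must be confident that the entire search tree has been exhausted and that no equivalence class has been pruned by mistake. To guard against this, I would conduct two independent searches, in the spirit of the authors' stated methodology. The first would aggressively prune by building, at each partial matrix $R$, the compatibility graph $\Gamma(R)$ of candidate extension rows and invoking \textbf{Cliquer} \cite{cNIS} to decide whether a clique of size $8-r$ exists, mirroring the approach used for $\mathrm{BH}(21,3)$ in Section~\ref{bh213x}. The second run would rely solely on the orderly-generation primitives from Section~\ref{sect3} without any external library. Agreement between the two searches—both returning zero canonical $8 \times 8$ matrices over $\mathbb{Z}_{15}$—constitutes the proof of the theorem.
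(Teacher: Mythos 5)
Your proposal is correct and takes essentially the same route as the paper: the authors run the orderly generation with second-column pruning and find $1$, $1$, $6$, and $0$ canonical $r\times 8$ matrices with $15$th root entries for $r\in\{1,2,3,4\}$, so the search dies at $r=4$. Your hand computation $|\mathcal{O}(8,15)|=7$ from Lemma~\ref{l2} and the two independent runs (with and without Cliquer) match the consistency checks the paper itself describes for its nonexistence results.
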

\begin{proof}
The proof is computational. We have generated the $r\times 8$ orthogonal matrices with $15$th root of unity entries with the orderly algorithm using the second column pruning strategy, and we found $1$, $1$, $6$, and $0$ such matrices for $r\in\{1,2,3,4\}$, respectively. Therefore there exist no $\mathrm{BH}(8,15)$ matrices.
\end{proof}
\begin{theorem}\label{nonex11}
There does not exist a $\mathrm{BH}(11,q)$ matrix for $q\in\{10,12,14,15\}$.
\end{theorem}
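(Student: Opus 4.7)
The plan is to mirror the computational strategy used for Theorem~\ref{nonex8}, namely to perform an exhaustive orderly search for $r\times 11$ orthogonal matrices with $q$th root entries and show that the search tree dies out before reaching $r=11$. Since $11$ is prime and each of the moduli $q\in\{10,12,14,15\}$ admits a representation $11=\sum w_ip_i$ with $p_i$ the prime divisors of $q$ (for example $11=3\cdot 2+1\cdot 5$ when $q=10$, and $11=4\cdot 2+1\cdot 3=1\cdot 2+3\cdot 3$ when $q=12$), Theorem~\ref{LLMAIN} does not rule out these parameters, so an exhaustive search is genuinely required.

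First I would precompute each of the four orthogonality sets $\mathcal{O}(11,q)$ using Lemma~\ref{l1} (not applicable here since none of the $q$ is a prime power) or rather Lemma~\ref{l2}, which applies to all four moduli as products of exactly two prime powers. For consistency checking these cardinalities can be cross-verified against Lemma~\ref{minor1}/\ref{minor2} where applicable. Next I would run the orderly generation of Section~\ref{subsorder} row by row, rejecting any candidate row whose difference with some previous row lies outside $\mathcal{O}(11,q)$, applying the second column pruning from Lemma~\ref{lsecpr}, and retaining only canonical submatrices. For each $q$ the output would be a short table listing, for $r=1,2,\dots$, the number of canonical $r\times 11$ extendible orthogonal matrices found; nonexistence of a $\mathrm{BH}(11,q)$ follows as soon as that count becomes zero. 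To further accelerate the final rows of the search, I would build the compatibility graph $\Gamma(R)$ introduced in Section~\ref{bh213x} and invoke Cliquer to verify the nonexistence of an $(11-r)$-clique of pairwise orthogonal completion rows, so that an inextendible $R$ is pruned immediately. As in the BH(21,3) case, each nonexistence claim would then be reconfirmed by a second, Cliquer-free run for documentation purposes.

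The principal obstacle is computational cost rather than mathematical subtlety. The case $q=12$ is the most delicate because $11$ has two distinct partitions into the primes $\{2,3\}$, so $\mathcal{O}(11,12)$ is comparatively large and the branching factor in the orderly search correspondingly high; the case $q=14$ benefits from the scarcity imposed by requiring a single block of length $7$. A secondary technical difficulty is that $q=10,14,15$ all lie outside the list $\{2,3,4,6\}$ for which the Gaussian/Eisenstein-integer hash of Section~\ref{subsorder} is directly available, so the lookup table for the divide-and-conquer row generation must use the modified hash indicated in the second Remark of Section~\ref{subsorder} (for $q=10$ exploiting $2\|\mathcal{E}_{n,10}(x)\|^2=A+\sqrt{5}B$, with analogous splittings over $\mathbb{Z}[\sqrt{5}]$ or the appropriate real quadratic ring for $q\in\{14,15\}$). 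Once these implementation details are settled, the proof reduces to reporting, for each of the four moduli, the level $r$ at which the orderly tree is exhausted.
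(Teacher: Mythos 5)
Your proposal matches the paper's proof: the authors likewise run the orderly generation with the second-column pruning strategy for each $q\in\{10,12,14,15\}$, record in Table~\ref{tablenonex11} the number of surviving $r\times 11$ matrices, and conclude nonexistence once the count reaches zero (at $r=3$ for $q=10$, at $r=6$ for $q\in\{14,15\}$, and at $r=7$ for $q=12$), with the Cliquer-assisted first pass and Cliquer-free verification run exactly as you describe. Since this is a computational statement, the only remaining content is actually executing the search, which your plan correctly specifies.
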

\begin{proof}
The proof is along the lines of the Proof of Theorem~\ref{nonex8}. Refer to Table~\ref{tablenonex11} for the number of orderly-generated, rectangular orthogonal $r\times 11$ matrices with $q$th roots of unity (where $q\in\{10,12,14,15\}$) surviving the second column pruning strategy. In each of the four cases no such matrices were found for some $r\in\{1,\dots,11\}$, hence $\mathrm{BH}(11,q)$ matrices do not exist. For comparison, the case $\mathrm{BH}(11,6)$ is also presented.
\end{proof}

\tiny
\begin{table}[htbp]
\[\begin{array}{r|rrrrrrrrrr}
\hline
r && \mathrm{BH}(11,6) && \mathrm{BH}(11,10) && \mathrm{BH}(11,12) && \mathrm{BH}(11,14) && \mathrm{BH}(11,15)\\
\hline
   1 &&     1 && 1 &&          1 &&    1 &&    1\\
	 2 &&     5 && 5 &&         32 &&    4 &&    3\\
	 3 &&   499 && 0 &&     168564 && 2091 &&  584\\
	 4 && 33655 &&   &&    7950174 && 2572 &&   94\\
   5 && 42851 &&   &&     561071 &&   14 &&   22\\
   6 &&   171 &&   &&        578 &&    0 &&    0\\
   7 &&     0 &&   &&          0 &&      &&     \\
	\hline
\end{array}\]
\caption{The nonexistence of $\mathrm{BH}(11,q)$ matrices for various $q$.}\label{tablenonex11}
\end{table}
\normalsize

\begin{theorem}\label{nonex13}
There does not exist a $\mathrm{BH}(13,10)$ matrix.
\end{theorem}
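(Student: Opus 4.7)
The plan is to establish this nonexistence computationally, following exactly the orderly-generation template used in the proofs of Theorem~\ref{nonex8} and Theorem~\ref{nonex11}. First I would precompute the orthogonality set $\mathcal{O}(13,10)$ via Lemma~\ref{l2}: since $10=2\cdot 5$, every element of $\mathcal{O}(13,10)$ decomposes into $s$ two-term parts $[r,r+5]$ with $r\in\{0,1,2,3,4\}$ and $t$ five-term parts $[R,R+2,R+4,R+6,R+8]$ with $R\in\{0,1\}$, subject to $2s+5t=13$. Among nonnegative solutions only $(s,t)=(4,1)$ is admissible, and Lemma~\ref{minor2} consequently predicts $|\mathcal{O}(13,10)|=\binom{8}{4}+\binom{7}{3}=70+35=105$. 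This number is used as a cross-check against the constructive generator of the set.

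Next I would run the row-by-row orderly search of Section~\ref{sect3} to enumerate rectangular $r\times 13$ orthogonal matrices with $10$th root entries that are in canonical form and survive the second column pruning of Lemma~\ref{lsecpr}. Orthogonality of difference rows is certified combinatorially through membership in $\mathcal{O}(13,10)$; candidate continuations $x\in\mathbb{Z}_{10}^{13}$ with $x_1=0$ are produced via the divide-and-conquer hashing technique of Section~3.3, adapted to $q=10$ through the representation $2\|\mathcal{E}_{n,10}(x)\|^2=A+\sqrt{5}B$ with $A,B\in\mathbb{Z}$ noted in the remark immediately following the description of the hash function. The goal is to exhibit, in a table analogous to Table~\ref{tablenonex11}, a row count $r\leq 13$ at which no partial matrix survives, thereby proving that $\mathrm{BH}(13,10)$ is empty.

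The main obstacle is that, in contrast to the $\mathrm{BH}(11,q)$ cases where the search collapses at row $r=3$ or $r=6$, here $n=13$ is larger and the richness of $\mathcal{O}(13,10)$ makes the branching factor at each level correspondingly larger; merely hoping for a quick collapse is unrealistic. If the straightforward enumeration stalls, I would fall back on the clique-based pruning strategy used in Section~\ref{bh213x} for the $\mathrm{BH}(21,3)$ problem: at some intermediate row count $r_0$, build for every canonical $R$ the compatibility graph $\Gamma(R)$ on the set of admissible orthogonal continuations and invoke Cliquer to certify the absence of a clique of size $13-r_0$, thereby rejecting $R$ without enumerating its descendants.

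Finally, once nonexistence has been certified with the Cliquer-assisted run, I would repeat the computation in a second, independent pass that avoids external libraries, as was done for Theorem~\ref{nonex8} and Theorem~\ref{nonex11}. This second pass yields a fully self-contained census of $r\times 13$ matrices surviving at each level and makes the verification auditable, producing the table of counts that serves as the body of the proof.
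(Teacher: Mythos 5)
Your proposal matches the paper's proof: the authors classify the canonical $r\times 13$ orthogonal $10$th-root matrices surviving the second-column pruning for $r\in\{1,2,3\}$ (finding $1$, $10$, and $127556$ of them) and then hand each $3\times 13$ starting point to Cliquer to certify that none extends to a full $\mathrm{BH}(13,10)$, exactly the hybrid orderly-generation-plus-clique-search strategy you describe (your cross-check $|\mathcal{O}(13,10)|=105$ via Lemma~\ref{minor2} is also computed correctly). The only cosmetic difference is that the paper commits to the clique phase already at $r_0=3$ rather than treating it as a fallback.
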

\begin{proof}
First, we classified the $r\times 13$ orthogonal $10$th root matrices surviving the second column pruning strategy for $r\in\{1,2,3\}$, and found $1$, $10$, and $127556$ such matrices, respectively. As a second step, we used Cliquer \cite{cNIS} to see if any of these $3\times 13$ starting-point matrices can be completed to a $\mathrm{BH}(13,10)$. This task took 250 CPU days, but unfortunately no complete matrices turned up during the search. We note that the number of $4\times 13$ matrices with the relevant properties is exactly $45536950$, and millions of $5\times 13$ and hundreds of $6\times 13$ matrices were found during an incomplete search.
\end{proof}

\section{Open problems}\label{sect99}
We conclude the paper with the following problems.
\begin{problem}
Extend Table~\ref{tableBE} further by classifying some of the remaining cases of $\mathrm{BH}(n,q)$ matrices in the range $n\leq 21$ and $q\leq 17$, and possibly beyond.
\end{problem}
Continue the classification of real Hadamard matrices by extending the work \cite{cORR}.
\begin{problem}
Classify all $\mathrm{BH}(36,2)$ matrices. Is it true that every $H\in\mathrm{BH}(36,2)$ has an equivalent form with constant row sum?
\end{problem}
For context regarding Problem~\ref{sscj} we refer the reader to \cite{cKMat}.
\begin{problem}[Spectral Set Conjecture in $\mathbb{R}^2$]\label{sscj}
Let $n$ and $q$ be positive integers, such that $n\nmid q^2$. Are there rectangular matrices $A$ and $B$ with elements in $\mathbb{Z}_q$ of size $n\times 2$ and $2\times n$, respectively, such that $L(H)=AB$ (modulo $q$) for some $H\in\mathrm{BH}(n,q)$?
\end{problem}
For context regarding Problem~\ref{asym} we refer the reader to \cite{cLL} (see also Remark~\ref{magicsumq}).
\begin{problem}[\mbox{cf.~\cite[Conjecture~7.6]{cBAN}}]\label{asym}
Let $n,q\geq 2$, let $H\in\mathrm{BH}(n,q)$, and let $r_1,r_2\in\mathbb{Z}_q^n$ be distinct rows of $L(H)$. Can $r_1-r_2\in\mathbb{Z}_q^n$ represent an ``asymmetric'' minimal $n$-term vanishing sum of $q$th roots of unity? In other words, is it possible that $\mathrm{Sort}(r_1-r_2)$ is minimal in the sense that it has no constituent of $m$-term vanishing subsums for $m<n$, yet it is not of the form $[0,1,\dots, p-1]\in\mathbb{Z}_q^n$ where $p$ is some prime divisor of $q$?
\end{problem}
Several $\mathrm{BH}(n,q)$ matrices with large $n$ and $q$ were constructed in \cite{cPET}, leading to infinite, parametric families of complex Hadamard matrices of prime orders for $n\equiv 1\ (\mathrm{mod}\ 6)$.
\begin{problem}
Find new examples of $\mathrm{BH}(n,q)$ matrices of prime orders $n\equiv 5\ (\mathrm{mod}\ 6)$.
\end{problem}
Problem~\ref{nextproblem} asks if a non-Desarguesian projective plane of prime order $p$ exists \cite{cHIR}.
\begin{problem}\label{nextproblem}
Let $p$ be a prime number. Decide the uniqueness of $F_p\in\mathrm{BH}(p,p)$.
\end{problem}
The next problem asks for the classification of $q$th root mutually unbiased bases \cite{cKA}.
\begin{problem}
Let $n,q\geq 2$, and let $H,K\in\mathrm{BH}(n,q)$. Classify all pairs $(H,K)$ for which $(HK^{\ast})/\sqrt{n}\in\mathrm{BH}(n,q)$.
\end{problem}

\appendix
\section{Butson matrices up to Hadamard equivalence}
Compare Table~\ref{tableBE} with Table~\ref{tableHEQ} and see Remark~\ref{HADEQ}.
\tiny
\begin{table}[htbp]
\begin{tabular}{r|rrrrrrrrrrrrrrrr}
\hline
${}_n\mkern-6mu\setminus\mkern-6mu{}^q$%
   &         2 &        3   &         4    &         5 &     6       &         7 &         8   &          9 &          10 &        11 &          12 &        13 &           14 &         15 &           16 & 17\\
\hline
 2 & 1 &   &   1 &   &  1 &    &     1 &    &   1 &  &   1 &  &    1 &  & 1 &  \\
 3 &   & 1 &     &   &  1 &    &       &  1 &     &  &   1 &  &      & 1 &  &  \\
 4 & 1 &   &   2 &   &  2 &    &     3 &    &   2 &  &   4 &  &    2 &  & 4 &  \\
 5 &   &   &     & 1 &  0 &    &       &    &   1 &  &   0 &  &      & 1 &  &  \\
 6 & 0 & 1 &   1 &   &  4 &    &     3 &  1 &   0 &  &  10 &  &    0 & 1 & 4 &  \\
 7 &   &   &     &   &  1 &  1 &       &    &   0 &  &   2 &  &    1 &  &  &  \\
 8 & 1 &   &  15 &   & 35 &    &   134 &    & 136 &  & 629 &  &  366 & 0 & 1224 &  \\
 9 &   & 2 &     &   & 10 &    &       & 10 &   1 &  &  33 &  &    0 & 22 &  &  \\
10 & 0 &   &   8 & 1 & 33 &    &    43 &    &  29 &  & 448 &  &    0 & 1 & 124 &  \\
11 &   &   &     &   &  0 &    &       &    &   0 & 1 &  0 &  &    0 & 0 &  &  \\
12 & 1 & 1 & 309 &   & 5758 &  & 28361 &  4 &   76085 &  & \text{E} &  & \text{E} & \text{E} & \text{E} &  \\
13 &   &   &     &   & 218 &   &       &    &   0 &  & \text{E} & 1 & \text{U} & \text{U} &  &  \\
14 & 0 &  & 520 &   & 92325 & 2 & \text{E} &  & \text{E} &  & \text{E} &  & \text{E} & \text{U} & \text{E} &  \\
15 &   & 0 &    & 0 &     0 &   &          & 0 & \text{U} &  & \text{U} &  & \text{U} & \text{E} &  &  \\
16 & 5 &   & 1111624 &  & \text{E} &  & \text{E} &  & \text{E} &  & \text{E} &  & \text{E} & \text{U} & \text{E} &  \\
17 &   &    &  &  & 0 &  &  &  & \text{U} &  & \text{U} &  & \text{U} & \text{U} &  & 1 \\
18 & 0 & 53 & \text{E} &  & \text{E} &  & \text{E} & \text{E} & \text{U} &  & \text{E} &  & \text{U} & \text{U} & \text{E} &  \\
19 &   &    &  &  & \text{E} &  &  &  & \text{U} &  & \text{E} &  & \text{U} & \text{U} &  &  \\
20 & 3 &    & \text{E} & \text{E} & \text{E} &  & \text{E} &  & \text{E} &  & \text{E} &  & \text{E} & \text{E} & \text{E} &  \\
21 &   & 36 &  &  & \text{E} & 0 &  & \text{E} & \text{U} &  & \text{E} &  & 0 & \text{E} &  &  \\
\hline
\end{tabular}
\caption{The number of $\mathrm{BH}(n,q)$ matrices up to Hadamard equivalence.}\label{tableHEQ}
\end{table}
\normalsize

\end{document}